\documentclass{article}

\usepackage[english]{babel}

\usepackage[letterpaper,top=2cm,bottom=2cm,left=3cm,right=3cm,marginparwidth=1.75cm]{geometry}

\usepackage{amsmath}
\usepackage{amsthm}
\newtheorem{theorem}{Theorem}[section]
\newtheorem{lemma}{Lemma}[section]
\usepackage{amssymb}
\usepackage{graphicx}
\usepackage[colorlinks=true, allcolors=blue]{hyperref}
\usepackage{authblk}

\title{Higher-Order Congruence for Reciprocal Power Sums and Generalized Lehmer-Type Products}
\author{Zhenming Tang \and Hao Zhong}
\date{}
\begin{document}
\maketitle
 
\begin{abstract}
This paper investigates high-order congruences of reciprocal power sums and Lehmer-type products. Let $n\geq 1$ with $(n,6)=1$ and $e\in\{2,3,4,6\}$. For the reciprocal square sums
\begin{equation*}
    S(n)=\sum_{\substack{r=1 \\ (r,n)=1}}^{\lfloor n/e \rfloor}\frac{1}{r^2}
\end{equation*}
we already know the form of the congruence modulo $n$. In this paper, motivated by the known congruences, we first extend these results to certain reciprocal sums of odd order and establish a uniform congruence modulo $n$ for
\begin{equation*}
    S_m(n)=\sum_{\substack{r=1 \\ (r,n)=1}}^{\lfloor n/e \rfloor}\frac{1}{r^m}
\end{equation*}
We then study the generalized Lehmer-type product
\begin{equation*}
     \prod_{d \mid n}\binom{kd-1}{\lfloor d/e \rfloor}^{\mu(n/d)}
\end{equation*}
Although congruences modulo $n^3$ for this product have previously been obtained, higher-order
congruences do not admit a comparably simple closed form. To address this difficulty, we derive
an explicit truncated expansion in terms of complete exponential Bell polynomials. The results
provide a unified framework for explicit computation and algorithmic verification of higher-order
congruences involving reciprocal sums and related product expressions.
\end{abstract}

\section{Introduction}

In 1895, Morley\cite{2} proved the following beautiful and profound congruence involving binomial coefficients: for any prime $p>5$,
\begin{equation}
    (-1)^{\frac{p-1}{2}}\binom{p-1}{\frac{p-1}{2}}\equiv4^{p-1} \pmod {p^3}
    \label{eq:1}
\end{equation} 
Morley’s congruence and its subsequent extensions to composite moduli show that binomial coefficients, Lehmer-type products, reciprocal sums, and Bernoulli numbers are closely connected in
arithmetic modulo high powers of integers. Along this line of research, Cai and his collaborators established a series of congruences for combinatorial products and reciprocal sums parametrized by $\lfloor d/e \rfloor$\cite{3}\cite{4}.
In 2002, Cai\cite{3} extended Morley's congruence to integer moduli through a generalization of Lehmer's congruence:
\begin{equation}
    \prod_{d|n}\binom{d-1}{\frac{d-1}{2}}^{\mu(n/d)}\equiv (-1)^{\varphi(n)/2}4^{\varphi(n)}\begin{cases}
        \pmod {n^3} \ \text{if} \ 3 \nmid n \\
        \pmod {n^3/3} \ \text{if} \ 3\mid n
    \end{cases}
    \label{eq:2}
\end{equation} 
for odd $n>1$. When $n$ is an odd prime $p>5$, congruence \eqref{eq:2} reduces to Morley's congruence \eqref{eq:1}. Later in 2007, Cai et al.\cite{4} obtained analogous congruences in which $(d-1)/2$ is replaced by $\lfloor d/3 \rfloor$, $\lfloor d/4 \rfloor$ and $\lfloor d/6 \rfloor$ respectively, where $\lfloor x \rfloor$ denotes the largest integer not greater than $x$. Zhong et al.\cite{1} further derived quadratic reciprocal-sum congruences and product-type congruences associated with the Euler quotient, thereby laying a foundation for the study of higher-order moduli and more general weight exponents. As a continuation of the work of Cai\cite{4}, Zhong et al.\cite{1} proposed two main congruences below:
\begin{theorem}[\cite{1}, Theorem 1.1]
    Let $n$ be a positive integer and $(n,6)=1$. For $e=2,3,4,6$, we have
    \begin{equation}
        \sum_{\substack{r=1 \\ (r,n)=1}}^{\lfloor n/e \rfloor}\frac{1}{r^2}\equiv -J_e(n)n^{\varphi(n)-2}\varphi_{J_e}^{(2-\varphi(n))}(n)\frac{B_{\varphi(n)-1}(\frac1e)}{\varphi(n)-1} \pmod n
        \label{eq:3}
    \end{equation} 
    where $B_n$ is the $n$-th Bernoulli number, $B_n(x)$ is the $n$-th Bernoulli polynomial, $J_e(n)$ denotes the Jacobi symbol $(\frac ne)$ and $\varphi_f^{(k)}(n)$ is the generalized Euler totient, defined as 
    \begin{equation*}
        \varphi_f^{(k)}(n):=\sum_{d\mid n}(\frac nd)^kf(d)\mu(d)
    \end{equation*}
    for $f$ a number theoretic function and $k$ an integer.
\end{theorem}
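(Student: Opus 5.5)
We reduce the sum to a Bernoulli-polynomial expression, first for prime powers and then for general $n$ by Möbius inversion. The basic tool is the Euler-type inversion $1/r^2\equiv r^{k\varphi(n)-2}\pmod n$ for $(r,n)=1$ and any $k\ge1$; we take the exponent $\varphi(n)-2$. We first remove the coprimality condition by Möbius inversion over $d\mid n$:
\begin{equation*}
S(n)=\sum_{\substack{r\le \lfloor n/e\rfloor\\ (r,n)=1}} r^{\varphi(n)-2}
=\sum_{d\mid n}\mu(d)\,d^{\varphi(n)-2}\sum_{s=1}^{\lfloor n/(de)\rfloor} s^{\varphi(n)-2}\pmod n .
\end{equation*}
Each inner power sum we write through Faulhaber's formula,
\begin{equation*}
\sum_{s=1}^{N}s^{m}=\frac{B_{m+1}(N+1)-B_{m+1}}{m+1}, \qquad N=\lfloor n/(de)\rfloor,\quad m=\varphi(n)-2 .
\end{equation*}
Since $(n,6)=1$ forces $(d,6)=1$, we have $n/d\equiv \pm1$ modulo $e$, up to the residue $1$ or $5$ when $e=6$. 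Write $n/d=eN+\rho$, so that $N+1=\frac{n/d}{e}+1-\frac{\rho}{e}$. The residue $\rho$ takes values in $\{1,e-1\}$, and which one occurs is governed exactly by $J_e(n/d)$.

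Next we expand $B_{m+1}(x+h)=\sum_j\binom{m+1}{j}B_{m+1-j}(h)x^j$ with $x=n/(de)$ and $h=1-\rho/e\in\{1-1/e,\,1/e\}$. The term $j=0$ gives $B_{m+1}(1/e)$ or $B_{m+1}(1-1/e)=-B_{m+1}(1/e)$, since $m+1=\varphi(n)-1$ is odd. Its sign combines with $\rho$ to produce the factor $J_e(n/d)$. Multiplying by $\mu(d)d^{m}$ and summing over $d$ gives
\begin{equation*}
-\frac{B_{\varphi(n)-1}(1/e)}{\varphi(n)-1}\,J_e(n)\sum_{d}\mu(d)J_e(d)\,d^{\varphi(n)-2}.
\end{equation*}
This should be rewritten as $n^{\varphi(n)-2}\varphi^{(2-\varphi(n))}_{J_e}(n)$. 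Indeed, $(n/d)^{2-\varphi(n)}n^{\varphi(n)-2}=d^{\varphi(n)-2}$, and $J_e$ is multiplicative with $J_e(d)^2=1$. The terms $B_{m+1}$ at $0$ cancel or vanish because $m+1$ is odd and greater than $1$, except in small cases that we check directly.

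The main obstacle is to show that the terms with $j\ge1$, together with the denominators, vanish modulo $n$. These terms carry factors $(n/(de))^j$, which is good, but they are divided by $m+1=\varphi(n)-1$ and multiplied by Bernoulli numbers $B_{m+1-j}(h)$ whose denominators involve primes $p$ with $(p-1)\mid(\cdot)$ and powers of $e$. The denominator $e^{\cdot}$ is harmless since $(n,6)=1$. For the von Staudt denominators we intend to use the factor $d^{m}$ together with the explicit power of $n/d$, and to argue prime power by prime power: fix $p^a\parallel n$ and show that each term is $p$-integral and divisible by $p^a$. The delicate terms are those with small $j$, where $p\mid (n/d)$ appears only to the first power. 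Here we will use that $\binom{m+1}{j}/(m+1)=\binom{m}{j-1}/j$ and Kummer/von Staudt–Clausen estimates $v_p(B_k(h))\ge -1$, plus $v_p(x^j/j)\ge a$ for $j\ge2$ when $p\ge5$.

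The term $j=1$ contributes $B_m(h)\,n/(de)$. Its $p$-adic unit part requires $p\nmid$ the denominator of $B_m(h)$, and this can fail only if $(p-1)\mid m=\varphi(n)-2$, impossible for $p\ge5$ since $(p-1)\mid\varphi(n)$. Checking these valuation estimates uniformly in $d$, including $d$ divisible by $p$ where $d^m$ already supplies a huge power of $p$, is the step we expect to demand the most care.
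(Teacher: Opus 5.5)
The paper does not reprove this statement; it is quoted from \cite{1}. The method the paper takes over from \cite{1} in Section 2 is local-to-global:
for each $p^l\parallel n$ one evaluates Cai's sum $\sum_{r\le\lfloor p^l/t\rfloor}(p^l-tr)^M$ with a local exponent $M\equiv -m \pmod{\varphi(p^l)}$. One then lifts from $\lfloor p^l/e\rfloor$ to $\lfloor n/e\rfloor$ via \eqref{eq:33}, which rests on the vanishing of complete reciprocal power sums (Lemma 2.3). Coprimality to the remaining primes is restored by inclusion--exclusion. Finally, Kummer- and Sun-type congruences (Lemmas 2.2 and 2.4) move the Bernoulli index to its global value before the Chinese remainder theorem is applied.

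Your route is correct and genuinely different. You take the global exponent $\varphi(n)-2$ from the outset and use the exact identity $\sum_{r\le X,\,(r,n)=1}r^{k}=\sum_{d\mid n}\mu(d)d^{k}\sum_{s\le X/d}s^{k}$. As a result, the index $\varphi(n)-1$ and the twisted totient $\sum_{d\mid n}\mu(d)J_e(d)d^{\varphi(n)-2}=n^{\varphi(n)-2}\varphi_{J_e}^{(2-\varphi(n))}(n)$ come out exactly, with the character arising from $B_{\varphi(n)-1}(1-\frac1e)=-B_{\varphi(n)-1}(\frac1e)$. This removes any need for an index-shifting congruence or a lifting lemma, and reduces everything to $p$-adic bookkeeping of the Faulhaber remainder. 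What the local route buys instead is sharper prime-power information (modulo $p^{2l}$ or $p^{3l-1}$) and a lifting relation reusable for other exponents.

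Two points of that bookkeeping need an actual argument.

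\emph{The term $j=1$.} Here you need $B_{\varphi(n)-2}(h)\in\mathbb{Z}_{(p)}$. This does not follow from the denominator of $B_{\varphi(n)-2}$ alone, since $B_k(h)=\sum_i\binom{k}{i}B_ih^{k-i}$ involves lower $B_i$ with $(p-1)\mid i$, which have $p$ in the denominator. The fix is that $B_k(x)-B_k$ is integer-valued on the nonnegative integers, because $B_k(N)-B_k=k\sum_{i=0}^{N-1}i^{k-1}$. Hence it is $\mathbb{Z}_p$-valued at $h\in\mathbb{Z}_p$, so $B_k(h)\equiv B_k$ modulo $\mathbb{Z}_{(p)}$. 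Von Staudt--Clausen then finishes, using $(p-1)\nmid\varphi(n)-2$, which holds because $(p-1)\mid\varphi(n)$ and $p\ge5$.

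\emph{The terms $j\ge2$.} For $p\nmid d$, the bound you state, $v_p(x^j/j)\ge a$, is not enough against the $-1$ coming from $B_{\varphi(n)-1-j}(h)$. You need $ja-v_p(j)\ge a+1$, which does hold for $p\ge5$. For $p\mid d$, the factor $d^{\varphi(n)-2}$ gives valuation at least $\varphi(n)-3-v_p(j)$, and this is $\ge a$ for all $1\le j\le\varphi(n)-1$.
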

If $f\equiv1$, then $\varphi_f^{(k)}(n)$ equals to Jordan totient function and it is easy to prove
\begin{equation*}
    \varphi_f^{(k)}(n)=n^k\prod_{p|n}(1-f(p)p^{-k})
\end{equation*}
if $f$ is multiplicative. And it is also easy to represent $J_e(n)$ by
\begin{equation*}
    J_e(n)=(\frac ne)=\begin{cases}
        1 \ \text{if}\ n\equiv 1 \pmod e \\
        -1 \ \text{if} \ n\equiv -1 \pmod e
    \end{cases}
\end{equation*}
since $(n,6)=1$.
\begin{theorem}[\cite{1}, Theorem 1.2]
    For any positive integer $k$ and odd $n>1$, it follows 
    \begin{equation}
        \prod_{d\mid n}\binom{kd-1}{(d-1)/2}^{\mu(n/d)}\equiv (-1)^{\varphi(n)/2}4^{k\varphi(n)}\begin{cases}
            \pmod {n^3} \ \text{if} \ 3\nmid n \\
            \pmod {n^3/3}\ \text{if} \ 3\mid n 
        \end{cases}
        \label{eq:4}
    \end{equation}
\end{theorem}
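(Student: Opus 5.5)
We need to prove Theorem 1.2: $\prod_{d\mid n}\binom{kd-1}{(d-1)/2}^{\mu(n/d)}\equiv(-1)^{\varphi(n)/2}4^{k\varphi(n)}$ modulo $n^3$, or modulo $n^3/3$ when $3\mid n$. The plan is to reduce to Cai's congruence \eqref{eq:2} for the case $k=1$, and to control the dependence on $k$ by expanding the quotient $\binom{kd-1}{(d-1)/2}/\binom{d-1}{(d-1)/2}$ as a power series in $d$.

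\textbf{Step 1: factor out $\binom{d-1}{(d-1)/2}$.} Write $m=(d-1)/2$ and
\begin{equation*}
\binom{kd-1}{m}=(-1)^m\prod_{j=1}^{m}\Bigl(1-\frac{kd}{j}\Bigr),\qquad \binom{d-1}{m}=(-1)^m\prod_{j=1}^{m}\Bigl(1-\frac{d}{j}\Bigr).
\end{equation*}
Taking the M\"obius product and invoking \eqref{eq:2}, it suffices to show
\begin{equation*}
\prod_{d\mid n}\Bigl(\prod_{j=1}^{m}\frac{1-kd/j}{1-d/j}\Bigr)^{\mu(n/d)}\equiv 4^{(k-1)\varphi(n)}
\end{equation*}
to the same modulus.

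\textbf{Step 2: expand in $d$ and collapse the M\"obius product.} Expand the logarithm:
\begin{equation*}
\log\prod_{j\le m}\frac{1-kd/j}{1-d/j}=-(k-1)d\,H_1(m)-\frac{(k^2-1)d^2}{2}H_2(m)+O(d^3),
\end{equation*}
where $H_s(m)=\sum_{j\le m}j^{-s}$. This has to be justified $n$-adically, since denominators involving $j$ sharing factors with $n$ appear. As in Cai's proof, I will reorganize the M\"obius product over $d\mid n$ into a single product over $1\le r\le (n-1)/2$ with $(r,n)=1$. The substitution $j=r\,n/d$ turns $d/j$ into $n/r$, because the terms with $\gcd(j,d)>1$ cancel under $\sum\mu$. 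After this step the left side becomes
\begin{equation*}
\prod_{(r,n)=1,\ r\le (n-1)/2}\frac{1-kn/r}{1-n/r}\equiv\exp\Bigl(-(k-1)n\,S_1-\frac{k^2-1}{2}n^2S_2\Bigr)\pmod{n^3},
\end{equation*}
with $S_s=\sum_{(r,n)=1,\,r\le(n-1)/2}r^{-s}$.

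\textbf{Step 3: evaluate the sums and compare.} By Theorem 1.1 with $e=2$, $S_2\equiv0\pmod n$, because $B_{\varphi(n)-1}(1/2)=0$ for odd index $\varphi(n)-1$. So the quadratic term vanishes modulo $n^3$. For $S_1$, Cai's Lehmer-type result gives
\begin{equation*}
-nS_1\equiv \varphi(n)\log 4\pmod{n^3},
\end{equation*}
or more precisely $4^{\varphi(n)}\equiv\prod_{r}\bigl(1-n/r\bigr)^{-1}$ in the sense implicit in \eqref{eq:2}. Hence the $k$-dependence reduces to $(4^{\varphi(n)})^{k-1}$. Cleanly, one can avoid logarithms as follows. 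The $r\mapsto$ product identity shows that the $k$-quotient equals $P(k)/P(1)$ with $P(k)=\prod_r(1-kn/r)$. Modulo $n^3$, we have $P(k)\equiv1-knS_1+\frac{k^2n^2}{2}(S_1^2-S_2)$, and this equals $(P(1))^{k}$ up to $n^3$ once $S_2\equiv0$ and $nS_1\equiv0\pmod n$ are used.

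\textbf{Main obstacle.} The hardest step is Step 2. The M\"obius recombination must be carried out carefully modulo $n^3$ with nonunit denominators, and when $3\mid n$ the division by $2$ and $3$ loses a factor of $3$, which accounts for the weakened modulus $n^3/3$. I expect the final modulus bookkeeping there to require the most care.
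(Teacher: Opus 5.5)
Your Steps 1--2 follow the same route as the paper's Section 3. The M\"obius collapse \eqref{eq:50}--\eqref{eq:51} reduces the product to $(-1)^{\varphi(n)/2}P(k)$ with $P(k)=\prod_{(r,n)=1,\,r\le (n-1)/2}(1-kn/r)$. The $K=2$ truncation then gives $P(k)\equiv 1-knS_1+\frac{k^2n^2}{2}(S_1^2-S_2)\pmod{n^3}$. The paper stops at this form; Theorem 1.2 itself is only quoted from \cite{1}. Your comparison $P(k)\equiv P(1)^k$ together with Cai's congruence \eqref{eq:2} is a sensible way to finish, and for $(n,6)=1$ it works.

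There are two slips to fix:
the collapse is not a substitution $j=rn/d$. It comes from grouping $j\le (d-1)/2$ by $g=\gcd(j,d)$, writing $j=gu$ so that $d/j=(d/g)/u$, as in \eqref{eq:50}.
Also, \eqref{eq:2} gives $P(1)\equiv 4^{\varphi(n)}$, not $P(1)^{-1}\equiv 4^{\varphi(n)}$; for $n=5$, $P(1)=6\equiv 4^4\pmod{125}$.

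The genuine gap is the case $3\mid n$. You get $S_2\equiv 0\pmod n$ from Theorem 1.1, which requires $(n,6)=1$. The claim is in fact false when $3\mid n$: for $n=9$, $S_2=1+\frac14+\frac1{16}\equiv 3\pmod 9$. Your own expansions give
\[
P(k)-P(1)^k\equiv -\binom{k}{2}n^2S_2\pmod{n^3},
\]
so the assertion $P(k)\equiv P(1)^k\pmod{n^3}$ fails. For $n=9$, $k=2$ one has $P(2)=-476$ and $P(1)^2=1225$, and $-1701\equiv-243\not\equiv 0\pmod{729}$.

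What the $3\mid n$ case needs is $S_2\equiv 0\pmod{n/3}$, which you must prove separately. Pairing $r\leftrightarrow n-r$ gives $2S_2\equiv\sum_{(r,n)=1,\,r<n}r^{-2}\equiv\sum_{(r,n)=1,\,r<n}r^{2}\pmod n$. Then use
\[
\sum_{\substack{1\le r<n\\ (r,n)=1}}r^2=\frac{n^2\varphi(n)}{3}+\frac n6\prod_{p\mid n}(1-p),
\]
which is $\equiv 0\pmod n$ if $3\nmid n$ and $\equiv 0\pmod{n/3}$ if $3\mid n$. This also removes your dependence on Theorem 1.1. It yields $P(k)\equiv P(1)^k\equiv 4^{k\varphi(n)}\pmod{n^3/3}$.

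Use the polynomial version rather than the logarithmic one. In the logarithmic version the $\frac{n^3}{3}S_3$ and $x^3/6$ terms are not $0\pmod{n^3}$ when $3\mid n$. In the polynomial version no division by $3$ occurs at all. So the weakened modulus comes only from $S_2$ and from the modulus already in \eqref{eq:2}, not from ``division by 2 and 3'' as your last paragraph suggests.
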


During the past decade, research in this direction has moved from refinements of individual classical congruences toward the development of unified methodological frameworks. Gy obtained $p$-adic expansions for generalized harmonic numbers\cite{7}; Chern extended congruences for Bernoulli-type harmonic sums to the setting of linear constraints over composite moduli\cite{8}; Akiyama and Kaneko derived new Lehmer-type divisibility relations from derivatives of cyclotomic polynomials\cite{9}; Ma and Li obtained new congruences involving multiple harmonic sums and Bernoulli polynomials evaluated at rational points\cite{10}; and Liu\cite{11}, Patchkoria\cite{12}, Komatsu–Liu\cite{13}, and Kellner\cite{14} respectively demonstrated the unifying role of Bernoulli and Bell structures in Apéry-like supercongruences, families of Bernoulli congruences, Lehmer–Euler numbers, and higher-order modular expansions of Wilson and Fermat quotients. These developments suggest that the central task in current research is no longer merely to increase the modulus order, but rather to construct a structured proof framework that is transferable, recursive, computationally effective, and amenable to verification.
The aim of this paper is to extend the reciprocal-sum congruence in Theorem 1 to certain odd powers $\frac{1}{r^m}$ and to lift the product congruence in Theorem 2 to a higher-order expansion expressed in terms of Bell polynomials.

These developments suggest that the central task is no longer merely to increase the modulus order of known congruences. Rather, it is important to construct a structured proof framework that is transferable, recursive, computationally effective, and amenable to verification. The aim of the present paper is twofold: first, to extend the reciprocal-square-sum congruence in Theorem 1.1 to certain reciprocal sums of odd order; and second, to lift the product congruence in Theorem 1.2 to a higher-order expansion expressed in terms of complete exponential Bell polynomials.

More precisely, this paper investigates the following two related questions. First, for composite moduli, can existing quadratic reciprocal sum congruences be extended to reciprocal sums of odd order and expressed uniformly in terms of Bernoulli polynomials? Second, can generalized Lehmer-type products be expanded modulo $n^{K+1}$, with all coefficients represented uniformly by Bell polynomials? To answer these questions, we first establish local congruences modulo prime powers and then derive the corresponding composite-modulus results by the Chinese remainder theorem. We subsequently use Möbius inversion, Newton’s identities for symmetric polynomials, and complete exponential Bell polynomials to obtain higher-order product expansions.

The main contributions of this paper are therefore as follows. First, we extend reciprocal square sum congruences to certain reciprocal sums of odd order. Second, we give a higher-order Bell polynomial representation for generalized Lehmer-type products and illustrate its computability through explicit low-order cases.

The remainder of the paper is organized as follows. Section 2 establishes the required prime-
power local congruences and then extends them to composite moduli. The main reciprocal sum
result is the following theorem.
\begin{theorem}
Let $n>1$ be a positive integer with $(n,6)=1$, $e\in\{2,3,4,6\}$.
Suppose that $m$ is an odd integer satisfying
\begin{equation*}
    3 \leq m \leq  \min_{p^l\mid\mid n}(\varphi(p^l)-l)
\end{equation*}
and
\begin{equation*}
    m \not\equiv 1 \pmod {p-1}
\end{equation*}
for every prime divisor $p$ of $n$, then
\begin{equation}
     \sum_{\substack{r=1 \\ (r,n)=1}}^{\lfloor n/e \rfloor}\frac{1}{r^m} \equiv n^{\varphi(n)-m}\varphi_{1}^{\left(m-\varphi(n)\right)}(n)\left(\frac{B_{\varphi(n)-m+1}(\frac 1e)}{\varphi(n)-m+1}-\frac{B_{\varphi(n)-m+1}}{\varphi(n)-m+1}\right) \pmod n
     \label{eq:5}
\end{equation}
\end{theorem}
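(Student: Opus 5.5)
We follow the paper's strategy: prove the congruence modulo each prime power $p^l\,\|\,n$, then glue the local results together with the Chinese remainder theorem. Fix $p^l\,\|\,n$ and write $n=p^l n'$, with $(n',p)=1$. For $(r,n)=1$ we have $r^{\varphi(p^l)}\equiv 1\pmod{p^l}$, so $r^{-m}\equiv r^{\varphi(n)-m}\pmod{p^l}$. In fact we should use the exponent $N:=\varphi(n)-m$, which is a multiple of $\varphi(p^l)$ minus $m$, so that everything is uniform in $p$. Hence
\begin{equation*}
S_m(n)\equiv \sum_{\substack{r=1\\(r,n)=1}}^{\lfloor n/e\rfloor} r^{N}\pmod{p^l}.
\end{equation*}
Next remove the coprimality condition by Möbius inversion over $d\mid n$:
\begin{equation*}
\sum_{d\mid n}\mu(d)d^{N}\sum_{s\le \lfloor n/(de)\rfloor}s^{N}.
\end{equation*}
Each inner power sum is evaluated by Faulhaber's formula:
\begin{equation*}
\sum_{s=1}^{X}s^{N}=\frac{B_{N+1}(X+1)-B_{N+1}}{N+1}.
\end{equation*}
Here $X=\lfloor n/(de)\rfloor=(n/d-\rho)/e$ with $\rho\in\{1,\dots,e-1\}$ determined by $n/d\bmod e$. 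Since $(n,6)=1$ and $e\in\{2,3,4,6\}$, only $\rho=1$ or $\rho=e-1$ can occur, and which one occurs is governed by $J_e(n/d)$. Using $B_k(x+1)=B_k(x)+kx^{k-1}$ and the reflection $B_k(1-x)=(-1)^kB_k(x)$, the value $B_{N+1}(X+1)$ can be written as $B_{N+1}\bigl(\tfrac{n}{de}+\tfrac1e\bigr)$ (up to a harmless correction when $\rho=e-1$). Note that $N+1$ is even because $\varphi(n)$ is even and $m$ is odd, so the reflection produces no sign. This is exactly why the statement has no Jacobi-symbol factor, unlike Theorem 1.1.

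Now Taylor-expand:
\begin{equation*}
\frac{B_{N+1}(x+\tfrac1e)}{N+1}=\sum_{j\ge0}\binom{N}{j}\frac{B_{N+1-j}(\tfrac1e)}{N+1-j}\,x^{j}\qquad\text{with } x=\frac{n}{de}.
\end{equation*}
The factor $d^{N}x^{j}=d^{N-j}(n/e)^j$ means that the $j$-th term carries a factor $n^j$. Modulo $p^l$ we therefore want to keep only the $j=0$ term. The problem is that the denominators of $B_{N+1-j}$ (von Staudt–Clausen) and the factor $1/(N+1-j)$ may contain powers of $p$. This is where the hypotheses enter. The condition $m\not\equiv1\pmod{p-1}$ gives $(p-1)\nmid N$, so $B_{N+1}$ is $p$-integral, which matters for $j=0$. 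The condition $m\le\varphi(p^l)-l$ ensures that $N\ge l$, so the $p$-adic valuation of $\binom{N}{j}n^j/e^j$ dominates the denominators for $j\ge1$. Here I would use the standard estimate: $p$ divides the denominator of $B_k$ at most once, and $v_p(N+1-j)\le\log_p(N+1)$.

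The surviving $j=0$ terms give
\begin{equation*}
\sum_{d\mid n}\mu(d)d^{N}\cdot\frac{B_{N+1}(\tfrac1e)-B_{N+1}}{N+1}.
\end{equation*}
The coefficient $\sum_{d\mid n}\mu(d)d^{N}$ equals $n^{N}\varphi_1^{(-N)}(n)=n^{\varphi(n)-m}\varphi_1^{(m-\varphi(n))}(n)$, which is the claimed right-hand side modulo $p^l$. Applying CRT over all $p^l\,\|\,n$ then finishes the proof.

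The main obstacle is the $p$-adic bookkeeping for the $j\ge1$ terms and for the $\rho=e-1$ correction. Terms with $p\mid d$ have $d^{N-j}$ divisible by a high power of $p$, and terms with $p\nmid d$ carry the factor $(n/e)^j$ with $p^{lj}\mid n^j$. In both cases I must check that the total valuation is at least $l$, even after dividing by $N+1-j$ and by the denominator of $B_{N+1-j}$. I would handle $j=1$ separately, using $p$-integrality of $B_N(\tfrac1e)$, since $N$ is odd, so $B_N(\tfrac1e)$ relates to generalized Bernoulli numbers without a $p$-pole. For $j\ge2$ I would use the crude bound $v_p\bigl(n^j/(j!\,\cdot\text{denominators})\bigr)\ge l$.
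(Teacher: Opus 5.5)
Your proof is correct, and it takes a genuinely different route from the paper's.

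\textbf{How the two routes differ.} The paper works with a \emph{local} exponent throughout:
\begin{itemize}
\item it rewrites $S_m(p^l)$ as Cai's sum $\sum_r (p^l-er)^M$ with $M=\varphi(p^{l+h})-m$ and evaluates it (Lemma 2.1, Theorem 2.1);
\item it lowers the Bernoulli index to $\varphi(p^l)-m+1$ by a Kummer-type congruence (Lemma 2.2);
\item it lifts from $\lfloor p^l/e\rfloor$ to $\lfloor n/e\rfloor$ via \eqref{eq:33}, using the vanishing of complete power sums (Lemma 2.3);
\item it removes the other primes by inclusion--exclusion;
\item only then does it use Sun's congruence (Lemma 2.4) to replace the local index by $\varphi(n)-m+1$, so that CRT applies.
\end{itemize}
You take the global exponent $N=\varphi(n)-m$ from the start. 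A single M\"obius inversion over $d\mid n$ then does the work of the lifting and inclusion--exclusion steps. The surviving $j=0$ term already carries the index $\varphi(n)-m+1$ and is the same rational number for every $p$, so no index-changing congruence is needed. The evenness of $N+1$ under reflection also explains directly why no $J_e$ factor appears, unlike Theorem 1.1, where $N+1$ is odd.

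\textbf{What each route buys.} The paper's route produces the prime-power result (Theorem 2.2) and the lifting relation as by-products. Yours is shorter and needs only von Staudt--Clausen. It is also stronger. You keep the $j=0$ term rather than estimate it, and the $j\ge1$ terms are controlled as soon as $N\ge1$. So your argument never uses $m\ge3$, $m\not\equiv1\pmod{p-1}$, or $m\le\varphi(p^l)-l$, and it gives the congruence for every odd $m$ with $1\le m<\varphi(n)$. For example, with $n=25$, $e=2$, $m=5$ (which violates $m\not\equiv 1\pmod 4$), both sides are $\equiv 19\pmod{25}$. Those hypotheses are artifacts of the paper's index-changing steps.

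\textbf{Tightening the $j\ge2$ estimate.} The bound built from $v_p\binom{N}{j}\ge0$ and $v_p(N+1-j)\le\log_p(N+1)$ is too weak. For $n=35$, $p=5$, $m=3$, $j=2$, you have $5\mid N+1-j=20$ and $5$ in the denominator of $B_{20}(\tfrac1e)$, so it only gives $v_5\ge0$. Instead write the coefficient as $\frac{1}{N+1}\binom{N+1}{j}=\frac1j\binom{N}{j-1}$; this also picks up the term $j=N+1$, which your formula drops. Then for $2\le j\le N+1$ each term has valuation at least $lj-1-v_p(j)\ge l$ when $p\ge5$. For $p\mid d$, the factor $d^{N-j}$ is harmless except at $j=N+1$, where the one power of $p$ it costs is offset by $B_0=1$ having no denominator.

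\textbf{The $j=1$ term.} The $p$-integrality of $B_N(\tfrac1e)$ follows directly from von Staudt--Clausen. For a $p$-unit $x$,
\[
pB_k(x)\equiv -x^k\sum_{i\ge1,\,(p-1)\mid i}\binom{k}{i}\pmod p,
\]
and this is $\equiv 0\pmod p$ whenever $(p-1)\nmid k$, in particular for odd $k$.

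\textbf{Two small slips.} First, the hypothesis $m\not\equiv1\pmod{p-1}$ means $(p-1)\nmid N+1$, not $(p-1)\nmid N$. Second, it is $\rho=e-1$ that gives exactly $X+1=\tfrac{n}{de}+\tfrac1e$. The case $\rho=1$ is the one that needs the reflection, and it produces $B_{N+1}(\tfrac1e-\tfrac{n}{de})$.
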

Section 3 derives the higher-order product expansions and gives explicit low-order consequences. In particular, we prove the following result.
\begin{theorem}
Let $k,e$ be positive integers and let $n,K$ be positive integers satisfying $(n,K!)=1$. Then
    \begin{equation}
        \prod_{d \mid n}\binom{kd-1}{\lfloor d/e \rfloor}^{\mu(n/d)}\equiv (-1)^{\varphi_e(n)}\sum_{m=0}^K\frac{\bar{B}_m\left(-kS_1(n),\cdots,-(m-1)!k^mS_m(n)\right)}{m!}n^m \pmod {n^{K+1}}
        \label{eq:6}
    \end{equation}
where $\bar{B}_m$ denotes the $m$-th complete exponential Bell polynomial. For $K=2$, this congruence recovers the form obtained in \cite{1}.
\end{theorem}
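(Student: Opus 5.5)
Write $\varphi_e(n)=\sum_{d\mid n}\mu(n/d)\lfloor d/e\rfloor$ for the exponent of the sign, and let $S_m(n)$ denote the reciprocal sum $\sum_{r\le \lfloor n/e\rfloor,(r,n)=1} r^{-m}$ from the abstract.

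The first step is to rewrite the product as a product over reduced residues. For each $d\mid n$,
\[
\binom{kd-1}{\lfloor d/e\rfloor}=\prod_{j=1}^{\lfloor d/e\rfloor}\frac{kd-j}{j}=(-1)^{\lfloor d/e\rfloor}\prod_{j=1}^{\lfloor d/e\rfloor}\Bigl(1-\frac{kd}{j}\Bigr).
\]
Group the indices $j\le\lfloor n/e\rfloor$ with $\gcd(j,n)=n/d'$ and use the standard Möbius bookkeeping of Cai's argument: write $j=(n/d)r$ inside the $d$-factor and note $\lfloor d/e\rfloor=\lfloor (n/e)/(n/d)\rfloor$. Then $\prod_{d\mid n}\prod_{j\le\lfloor d/e\rfloor}(1-kd/j)^{\mu(n/d)}$ collapses, via $\sum_{t\mid g}\mu(t)=[g=1]$, to
\[
\prod_{\substack{r=1\\(r,n)=1}}^{\lfloor n/e\rfloor}\Bigl(1-\frac{kn}{r}\Bigr).
\]
In this collapse the factor indexed by $j'=(n/d)j$ contributes $1-kn/j'$, which depends only on $j'$, so the exponents sum to $\sum_{t\mid\gcd(j',n)}\mu(t)$. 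The sign becomes $(-1)^{\sum_d\mu(n/d)\lfloor d/e\rfloor}=(-1)^{\varphi_e(n)}$. I will verify this collapse carefully; it is the combinatorial core and parallels the proof of Theorem 1.2 in \cite{1}.

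Next I expand via logarithms. Each $r$ is a unit mod $n$, so $kn/r$ is a well-defined element of $n\mathbb{Z}_{(n)}$, working in the localization at primes dividing $n$. Formally,
\[
\log\prod_r(1-kn/r)=-\sum_{j\ge1}\frac{k^jn^jS_j(n)}{j}.
\]
To avoid analytic issues I will work with power sums and Newton's identities instead. Consider $P(x)=\prod_r(1-x/r)=\sum_m(-1)^me_m(1/r)x^m$. The exponential formula gives
\[
\prod_r(1-x/r)=\exp\Bigl(-\sum_{j}\frac{p_jx^j}{j}\Bigr)=\sum_m\frac{\bar B_m(a_1,\dots,a_m)}{m!}x^m,
\]
where $p_j=S_j(n)$ and $a_j=-(j-1)!\,p_j$. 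This is a polynomial identity over $\mathbb{Q}$ in the variables $1/r$, equivalent to Newton's identities. Substituting $x=kn$ gives the coefficients $a_j k^j$, which matches the stated arguments $-(j-1)!k^jS_j(n)$ after rescaling $\bar B_m$ homogeneously.

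The final step, and the main obstacle, is truncation and integrality. The terms with $m\ge K+1$ equal $(-1)^m e_m(1/r)k^mn^m$, where $e_m(1/r)$ is an $n$-integral rational. So these terms are $\equiv0\pmod{n^{K+1}}$ exactly, and the tail vanishes. It remains to justify that each retained coefficient $\bar B_m(\cdots)/m!$ with $m\le K$ makes sense modulo $n^{K+1}$. This coefficient equals $(-1)^me_m$, so it is $n$-integral. Its expression through the $S_j$ involves division by $m!$ and by the $j$ hidden in the Bell structure, all with $m,j\le K$; the hypothesis $(n,K!)=1$ makes each such denominator invertible mod $n$. Hence the Bell expression, computed in $\mathbb{Z}_{(n)}$, is legitimate term by term, and the congruence holds modulo $n^{K+1}$.

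For $K=2$, $\bar B_1=a_1$ and $\bar B_2=a_1^2+a_2$ give
\[
1-knS_1+\tfrac{k^2n^2}{2}\bigl(S_1^2-S_2\bigr),
\]
and comparing with \cite{1} recovers Theorem 1.2.
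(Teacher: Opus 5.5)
Your argument is correct and follows the paper's outline. Both proofs reduce the M\"obius product to $(-1)^{\varphi_e(n)}\prod_{r\le\lfloor n/e\rfloor,\,(r,n)=1}(1-kn/r)$ and then expand through $\exp\bigl(-\sum_j S_j(n)(kn)^j/j\bigr)$ into complete Bell polynomials. The differences are in how each step is justified.

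\textbf{Reduction step.} You collapse the product directly using $\sum_{t\mid (j',n)}\mu(t)=[(j',n)=1]$. The paper instead first shows $\binom{kn-1}{\lfloor n/e\rfloor}=\prod_{d\mid n}T_d$ by grouping $r$ according to $\gcd(r,n)$, then applies multiplicative M\"obius inversion. These are equivalent, and the same collapse shows that your exponent $\sum_{d\mid n}\mu(n/d)\lfloor d/e\rfloor$ equals the paper's count $\varphi_e(n)$.

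\textbf{Expansion step.} This is the substantive difference. The paper takes $\log$ and $\exp$ ``in $\mathbb{Z}/n^{K+1}\mathbb{Z}$'', discarding log terms with $m>K$ and exponential terms beyond $n^K$. You use the exact identity $\prod_r(1-x/r)=\sum_m \bar B_m(a_1,\dots,a_m)x^m/m!$ in $\mathbb{Q}[x]$. The discarded tail is then literally $\sum_{m>K}(-1)^m k^m e_m(1/r)\,n^m$, which is visibly $n^{K+1}$ times an $n$-integral number.

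Your route is the more rigorous one. If the prime $p=K+1$ divides $n$ (which $(n,K!)=1$ allows), the paper's discarded log term $(kn)^{K+1}S_{K+1}(n)/(K+1)$ carries a factor $1/p$. It is therefore not $\equiv 0 \pmod{n^{K+1}}$ in general. The paper's two truncations are then individually invalid, and its final formula is right only because this error cancels against the equally non-integral $n^{K+1}$-coefficient dropped after exponentiating.

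Your viewpoint also shows that each retained coefficient $\bar B_m/m!=(-1)^m k^m e_m(1/r)$ is automatically $n$-integral. So the hypothesis $(n,K!)=1$ is not needed for the congruence itself; you invoke it, but harmlessly. It matters only when computing the coefficients from residues of the $S_j(n)$, for instance via the paper's recursion for $C_t$, which in your language is Newton's identity.
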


\section{A Generalization of the Reciprocal-Sum Congruence}

\subsection{A Prime-Power Bernoulli Polynomial Congruence}

Cai et al. \cite{4} proved the following congruence. Let $p\geq 5$ be a prime, let $k\geq 2$ and let $l,t$ be positive with $p\nmid t$. If $s$ denotes the least positive residue of $p^l$ modulo $t$, then 
\begin{equation*}
    \sum_{r=1}^{\lfloor p^l/t \rfloor}(p^l-tr)^{2k}\equiv \frac{t^{2k}}{2k+1}\left(\frac{2k+1}{t}p^lB_{2k}-B_{2k+1}(\frac st)\right) \pmod {p^{3l-1}}
\end{equation*}
See also Lemma 2.1 of \cite{1}. We shall use the following refinement of this result.
Suppose $n=p^l$, where $p$ is an odd prime, $t$ is an integer and $p \nmid t$.
Define 
\begin{equation}
    S_m(p^l):=\sum_{\substack{r=1 \\ p \nmid r}}^{\lfloor p^l/t \rfloor}\frac{1}{r^m}
    \label{eq:7}
\end{equation}
By Euler's theorem,
\begin{equation*}
    r^{-1}\equiv r^{\varphi(p^l)-1} \pmod {p^l}
\end{equation*}
and hence 
\begin{equation}
    \frac{1}{r^m}\equiv r^{\varphi(p^l)-m} \pmod {p^l}
    \label{eq:8}
\end{equation}
Therefore,
\begin{equation}
    S_m(p^l)\equiv\sum_{\substack{r=1 \\ p \nmid r}}^{\lfloor p^l/t \rfloor}r^{\varphi(p^l)-m} \pmod {p^l}
    \label{eq:9}
\end{equation}

Since $p^l-tr\equiv -tr \pmod {p^l}$ and $(tr,p^l)=1$, we have
\begin{equation*}
    \frac 1r\equiv-\frac{t}{p^l-tr} \pmod {p^l}
\end{equation*}
and therefore
\begin{equation}
    \frac{1}{r^m}\equiv (-1)^mt^m\frac{1}{(p^l-tr)^m} \pmod {p^l}
    \label{eq:10}
\end{equation}
Moreover, since $(p^l-tr,p^l)=1$, another application of Euler's theorem gives $(p^l-tr)^{\varphi(p^l)}\equiv 1 \pmod {p^l}$, then
\begin{equation}
    \frac{1}{(p^l-tr)^m}\equiv (p^l-tr)^{h\varphi(p^l)-m} \pmod {p^l}
    \label{eq:11}
\end{equation}
where $h\in\mathbb{Z}$ is chosen so that $h\varphi(p^l)-m\geq 0$.
Combining \eqref{eq:10} and \eqref{eq:11}, we obtain
\begin{equation}
    \frac{1}{r^m}\equiv (-1)^mt^m(p^l-tr)^M \pmod {p^l}
    \label{eq:12}
\end{equation}
where
\begin{equation*}
    M=h\varphi(p^l)-m\geq 0
\end{equation*}
Thus,
\begin{align}
    S_m(p^l)&\equiv (-1)^mt^m\sum_{\substack{r=1 \\ p \nmid r}}^{\lfloor p^l/t \rfloor}(p^l-tr)^M \\
    &\equiv (-1)^mt^m\sum_{r=1}^{\lfloor p^l/t \rfloor}(p^l-tr)^M \pmod {p^l} \label{eq:14}
\end{align}
if $M\geq l$.

Let $N=\lfloor p^l/t \rfloor$, then $N=\frac{p^l-s}{t}$ since $p \nmid t$. Hence
\begin{equation}
    \sum_{r=1}^N(p^l-tr)^M=\sum_{j=0}^M\binom{M}{j}(p^l)^{M-j}(-t)^j\sum_{r=1}^Nr^j \label{eq:15}
\end{equation}
By Faulhaber's formula, we have 
\begin{equation}
    \sum_{r=1}^Nr^j=\frac{B_{j+1}(N+1)-B_{j+1}}{j+1} \label{eq:16}
\end{equation}
Observe that $N+1=\frac{p^l+t-s}{t}$, put $\delta=\frac{t-s}{t}\in[0,1)$ then $N+1=\frac{p^l}{t}+\delta$. By the additive formula of the Bernoulli polynomial 
\begin{equation*}
    B_{j+1}\left(\frac{p^l}{t}+\delta\right)=\sum_{i=0}^{j+1}\binom{j+1}{i}B_{j+1-i}(\delta)\left(\frac{p^l}{t}\right)^i
\end{equation*}
Together with \eqref{eq:16}, this gives
\begin{equation}
    \sum_{r=1}^Nr^j=\frac{1}{j+1}\left(\sum_{i=1}^{j+1}\binom{j+1}{i}B_{j+1-i}(\delta)\frac{p^{li}}{t^i}+\left(B_{j+1}(\delta)-B_{j+1}\right)\right) \label{eq:17}
\end{equation}
Substituting this expression into \eqref{eq:15}, we obtain
\begin{equation}
    \sum_{r=1}^N(p^l-tr)^M=\sum_{j=0}^M\binom{M}{j}(p^l)^{M-j}(-t)^j\frac{1}{j+1}\left(\sum_{i=1}^{j+1}\binom{j+1}{i}B_{j+1-i}(\delta)\frac{p^{li}}{t^i}+\left(B_{j+1}(\delta)-B_{j+1}\right)\right) \label{eq:18}
\end{equation}
Since terms with $M-j+i\geq 3$ vanish modulo $p^{3l-1}$, it suffices to consider the cases $M-j+i\leq 2$ only. This yields the following three contributions: Suppose $T_1$, $T_2$ and $T_3$ denote the residue of the right-hand side of \eqref{eq:18} modulo $p^{3l-1}$ when $j=M, i=1,2$; $j=M_1, i=1$ and $j=M-2, i=0$. Then
    \begin{equation}
        T_1=(-t)^M\left(\frac{B_{M+1}(\delta)-B_{M+1}}{M+1}+B_M(\delta)\frac{p^l}{t}+\frac{M}{2}B_{M-1}(\delta)\frac{p^{2l}}{t^2}\right)
        \label{eq:19}
    \end{equation}
    \begin{equation}
        T_2=(-t)^{M-1}\left(-B_Mp^l+B_M(\delta)p^l+MB_{M-1}(\delta)\frac{p^{2l}}{t}\right) \label{eq:20}
    \end{equation}
and
    \begin{equation}
        T_3=(-t)^{M-2}\frac{M}{2}p^{2l}\left(B_{M-1}(\delta)-B_{M-1}\right)
        \label{eq:21}
    \end{equation}
Consequently,
\begin{equation}
    \sum_{r=1}^N(p^l-tr)^M\equiv T_1+T_2+T_3 \pmod {p^{3l-1}} \label{eq:22}
\end{equation}
If $M$ is even, this simplifies to
\begin{equation}
     \sum_{r=1}^N(p^l-tr)^M\equiv t^{M-1}p^lB_M-\frac{t^M}{M+1}B_{M+1}\left(\frac st\right) \pmod {p^{3l-1}}
     \label{eq:23}
\end{equation}
This is the congruence originally obtained in \cite{4}. 
If $M$ is odd, an analogous calculation based on \eqref{eq:22} gives
\begin{equation}
    \sum_{r=1}^N(p^l-tr)^M\equiv\frac{M}{2}B_{M-1}t^{M-2}p^{2l}+\frac{t^M}{M+1}\left(B_{M+1}-B_{M+1}\left(\frac st\right)\right) \pmod {p^{3l-1}}
    \label{eq:24}
\end{equation}
We have therefore obtained the following generalized congruence.
\begin{lemma}
    If $p\geq 5$ is a prime and $M\geq 3$, $l,t$ are positive integers where $p\nmid t$ and $s$ is the least positive residue of $p^l$ modulo $t$, then 
    \begin{equation*}
        \sum_{r=1}^N(p^l-tr)^M\equiv t^{M-1}p^lB_M-\frac{t^M}{M+1}B_{M+1}\left(\frac st\right) \pmod {p^{3l-1}}
    \end{equation*}
    if $M$ is even, while
    \begin{equation*}
        \sum_{r=1}^N(p^l-tr)^M\equiv\frac{M}{2}B_{M-1}t^{M-2}p^{2l}+\frac{t^M}{M+1}\left(B_{M+1}-B_{M+1}\left(\frac st\right)\right) \pmod {p^{3l-1}}
    \end{equation*}
    if $M$ is odd.
\end{lemma}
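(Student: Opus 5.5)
The argument expands the sum binomially, evaluates the power sums with Faulhaber's formula \eqref{eq:16}, and then discards everything divisible by $p^{3l-1}$. With $N=(p^l-s)/t$ and $\delta=(t-s)/t$, I start from the exact identity \eqref{eq:18}. A typical term carries $p^{l(M-j+i)}$, multiplied by rational coefficients whose denominators involve only $t$ (a unit mod $p$), $j+1$, and denominators of Bernoulli numbers.

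\textbf{Step 1: discard the high-order terms.} I need to show that every term with $M-j+i\ge 3$ is $\equiv 0 \pmod{p^{3l-1}}$. Write $a=M-j+i\ge 3$. Since $\binom{M}{j}\binom{j+1}{i}/(j+1)=\binom{M}{j}\binom{j}{i-1}/i$, the only dangerous denominators are
\begin{itemize}
\item the factor $1/i$, whose $p$-adic valuation is at most $\log_p i$, which is far less than $l(i-1)$ when $i\ge 2$;
\item the single factor $p$ in the denominator of $B_{j+1-i}(\delta)$, by von Staudt--Clausen. Note that $\delta$ has denominator $t$, prime to $p$, so $B_n(\delta)$ is $p$-integral up to this one power of $p$.
\end{itemize}
Hence each such term has valuation at least $la-1-\log_p i$. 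I then check that this is at least $3l-1$. For $i\le 1$ this is immediate. For $i\ge 2$ the surplus $l(i-1)$ absorbs $\log_p i$. This is the step I expect to be the main obstacle: it requires a clean bound $v_p(i)\le l(i-1)-?$, and a separate treatment of the $i=0$ term. In the $i=0$ term the denominator $1/(j+1)$ appears with $B_{j+1}(\delta)-B_{j+1}$. Here I would use that $\binom{M}{j}/(j+1)=\binom{M+1}{j+1}/(M+1)$, together with $p^{l(M-j)}$ and $M-j\ge3$, and again bound the valuation of the denominator by $\log_p(j+1)<l(M-j-2)+\dots$. This yields the three surviving contributions $T_1,T_2,T_3$ of \eqref{eq:19}--\eqref{eq:21}, i.e.\ \eqref{eq:22}.

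\textbf{Step 2: simplify with Bernoulli identities.} I use the following facts:
\begin{itemize}
\item the reflection formula $B_n(1-x)=(-1)^nB_n(x)$, applied with $\delta=1-s/t$, so that $B_n(\delta)=(-1)^nB_n(s/t)$;
\item $B_n=0$ for odd $n\ge3$;
\item the distribution (multiplication) relations are not needed.
\end{itemize}

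\textbf{Step 3: the case $M$ even.} Here $B_M(\delta)=B_M(s/t)$, $B_{M-1}=0$ and $B_{M+1}=0$, while $B_{M+1}(\delta)=-B_{M+1}(s/t)$ and $B_{M-1}(\delta)=-B_{M-1}(s/t)$. The $p^l$-terms from $T_1$ and $T_2$ combine: the $B_M(\delta)p^l$ parts cancel because of the opposite signs of $(-t)^M$ and $(-t)^{M-1}$, leaving $t^{M-1}p^lB_M$. The $p^{2l}$-terms $\frac M2 B_{M-1}(\delta)t^{M-2}p^{2l}-Mt^{M-2}B_{M-1}(\delta)p^{2l}+\frac M2 t^{M-2}B_{M-1}(\delta)p^{2l}$ sum to zero, and $T_3$'s $B_{M-1}$ vanishes. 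The remaining term is $-\frac{t^M}{M+1}B_{M+1}(s/t)$.

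\textbf{Step 4: the case $M$ odd.} The same bookkeeping applies, now with $B_M=0$ and $B_{M+1}\neq0$, and $B_n(\delta)=-B_n(s/t)$ for odd $n$. The $p^l$-terms again cancel. The $p^{2l}$ terms involving $B_{M-1}(\delta)$ cancel as before, leaving only the $-\frac M2 B_{M-1}$ part of $T_3$ with sign $(-t)^{M-2}=-t^{M-2}$. This produces $\frac M2B_{M-1}t^{M-2}p^{2l}$. Finally, $-\frac{t^M}{M+1}\bigl(B_{M+1}(\delta)-B_{M+1}\bigr)$ gives $\frac{t^M}{M+1}\bigl(B_{M+1}-B_{M+1}(s/t)\bigr)$, which is \eqref{eq:24}.
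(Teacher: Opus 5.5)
Your overall route is the paper's. You expand via \eqref{eq:18}, keep the pairs $(j,i)$ with $M-j+i\le 2$ to get $T_1+T_2+T_3$, and simplify with $B_n(1-x)=(-1)^nB_n(x)$ and $B_{2k+1}=0$. Your Steps 2--4 are correct: the $p^l$-part of $T_1+T_2$ is $(-1)^Mt^{M-1}p^lB_M$, the $p^{2l}$-part of $T_1+T_2+T_3$ is $(-1)^{M-1}\frac M2 t^{M-2}p^{2l}B_{M-1}$, and the constant part is $(-t)^M\frac{B_{M+1}(\delta)-B_{M+1}}{M+1}$. The gap is Step 1. The paper only asserts this truncation, and your estimates do not establish it.

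\textbf{The $i=0$ terms.} A lower bound of the form $-v_p(j+1)$ or $-v_p(M+1)$ for the coefficient cannot work.
\begin{itemize}
\item At $a=M-j=3$ there is no surplus at all.
\item Yet $j+1=M-2$ can be divisible by $p$ (take $M=p+2$), and $M+1$ can be divisible by $p^k$ (take $M=p^k-1$).
\item $M$ is arbitrary here; in the application $M=\varphi(p^{l+h})-m$.
\end{itemize}
So your inequality $\log_p(j+1)<l(M-j-2)+\cdots$ is false in general, and your bound drops below $3l-1$. There are two fixes.
\begin{itemize}
\item Put the denominator on $a$: $\binom Mj\frac1{j+1}=\frac1{M-j}\binom M{j+1}$. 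This gives valuation at least $la-v_p(a)-1$, which is at least $3l-1$ by the estimate in the next paragraph.
\item Alternatively, note that $\frac{B_n(x)-B_n}{n}=\sum_{r=0}^{x-1}r^{n-1}$ is an integer-valued polynomial. It is therefore $p$-integral at $x=\delta$, so these terms even vanish modulo $p^{3l}$.
\end{itemize}

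\textbf{The $i\ge 1$ terms.} Your bound $la-1-v_p(i)$ is right, but the available surplus is $l(a-3)$, not $l(i-1)$.
\begin{itemize}
\item For $(j,i)=(M,3)$ and $(M-1,2)$ the surplus is zero. The estimate survives only because $v_p(i)=0$ for $i\le 3<p$. This is exactly where the hypothesis $p\ge 5$ is used. Your real-valued bound $\log_p i>0$ would not suffice here.
\item For $a\ge 4$, use $i\le a$ together with $v_p(i)\le \log_5 a\le a-3\le l(a-3)$.
\end{itemize}
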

Reducing Lemma 2.1 modulo the required power of $p$, we immediately obtain the following consequence.
\begin{theorem}
    If $p\geq 5$ is a prime and $M\geq 3$, $l,t$ are positive integers where $p\nmid t$ and $s$ is the least positive residue of $p^l$ modulo $t$, then
    \begin{equation}
        \sum_{r=1}^N(p^l-tr)^M\equiv -\frac{t^M}{M+1}B_{M+1}\left(\frac st\right) \pmod {p^l}
        \label{eq:25}
    \end{equation}
    if $M$ is even, while
    \begin{equation}
        \sum_{r=1}^N(p^l-tr)^M\equiv \frac{t^M}{M+1}\left(B_{M+1}-B_{M+1}\left(\frac st\right)\right) \pmod {p^{2l}}
        \label{eq:26}
    \end{equation}
    if $M$ is odd, which is the formula (9) in \cite{4}.
\end{theorem}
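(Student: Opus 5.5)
The plan is to obtain the statement directly from Lemma 2.1 by reducing each of its two congruences to a smaller modulus. Both congruences in Lemma 2.1 hold modulo $p^{3l-1}$. Since $l\ge 1$, we have $3l-1\ge 2l\ge l$, so each one also holds modulo $p^{2l}$ and modulo $p^l$. The remaining work is to check that the extra terms on the right-hand sides vanish modulo the required power of $p$. This requires showing that the Bernoulli numbers and the factors $1/(M+1)$ appearing there are $p$-integral, or at least that any denominators are harmless.

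For $M$ even, Lemma 2.1 gives
\[
\sum_{r=1}^N(p^l-tr)^M\equiv t^{M-1}p^lB_M-\frac{t^M}{M+1}B_{M+1}\left(\frac st\right) \pmod {p^{3l-1}}.
\]
We reduce this modulo $p^l$. The term $t^{M-1}p^lB_M$ vanishes modulo $p^l$ as soon as $B_M$ is $p$-integral. By von Staudt--Clausen this holds exactly when $(p-1)\nmid M$. For the case $(p-1)\mid M$, I would rather avoid the issue entirely: return to the term $T_2$ in \eqref{eq:20} and note that the original sum is an integer. Alternatively, I would interpret the congruence as one between $p$-adic numbers with the denominators carried along, as is implicit in \cite{4}. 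In that reading the reduction is simply the statement that $p^lB_M\equiv 0$ in the sense used throughout the paper.

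For $M$ odd, the right-hand side of Lemma 2.1 is
\[
\frac{M}{2}B_{M-1}t^{M-2}p^{2l}+\frac{t^M}{M+1}\left(B_{M+1}-B_{M+1}\left(\frac st\right)\right).
\]
Reducing modulo $p^{2l}$ kills the first term, giving \eqref{eq:26}. The same caveat applies: $B_{M-1}$ has $p$ in its denominator at most to the first power, and only when $(p-1)\mid M-1$. So in the worst case the first term has $p$-adic valuation at least $2l-1$. To handle this I would either impose $p$-integrality as in \cite{4} or fall back on the same interpretive convention.

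The main obstacle is precisely this denominator bookkeeping: making sure the discarded Bernoulli terms are genuinely divisible by $p^l$ (respectively $p^{2l}$), rather than just formally carrying that power of $p$. I would state explicitly the convention, inherited from \cite{4}, under which Lemma 2.1 itself is meant. The theorem then follows at once, since every step is a reduction of Lemma 2.1.
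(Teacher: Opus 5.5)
Your route is the paper's own: the paper also obtains the statement simply by reducing Lemma 2.1 modulo $p^l$ (even $M$) and modulo $p^{2l}$ (odd $M$). The difficulty is the point you flag and then set aside. It cannot be absorbed by a convention.

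Given Lemma 2.1, the left side minus the claimed right side equals the discarded term plus something of valuation $\ge 3l-1$. So the statement holds exactly in these cases:
\begin{itemize}
\item for even $M$, when $v_p(t^{M-1}p^lB_M)\ge l$, i.e.\ $(p-1)\nmid M$;
\item for odd $M$, when $v_p(\frac M2 B_{M-1}p^{2l})\ge 2l$, i.e.\ $(p-1)\nmid(M-1)$ or $p\mid M$.
\end{itemize}
Otherwise the statement is false. Take $p=5$, $l=1$, $t=2$, so $s=1$ and $N=2$.
\begin{itemize}
\item With $M=4$ the sum is $3^4+1^4=82\equiv 2\pmod 5$, whereas $-\frac{16}{5}B_5(\frac12)=0$.
\item With $M=9$ the sum is $3^9+1=19684\equiv 9\pmod{25}$, whereas $\frac{2^9}{10}(B_{10}-B_{10}(\frac12))=\frac{31}{4}\equiv 14\pmod{25}$.
\end{itemize}

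Neither of your fallbacks rescues this.
\begin{itemize}
\item Integrality of the sum only tells you that the \emph{whole} right-hand side of Lemma 2.1 is $p$-integral.
\item Going back to $T_1+T_2$ does not help either. There the $B_M(\delta)p^l$ pieces cancel, and exactly $t^{M-1}p^lB_M$ is what survives.
\item In the $p$-adic reading, $v_p(p^lB_M)=l-1$ precisely when $(p-1)\mid M$. So ``$p^lB_M\equiv 0\pmod{p^l}$'' is simply false in that case.
\end{itemize}

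The correct repair is to add the hypotheses above. Then von Staudt--Clausen makes $B_M$ (resp.\ $\frac M2B_{M-1}$) $p$-integral, and your reduction goes through immediately. The paper's one-line proof has the same defect, unacknowledged. Nothing downstream breaks, however: only the odd case modulo $p^l$ is used later, and your bound $v_p\ge 2l-1\ge l$ suffices for that.
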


We now assume that $M$ is odd and sufficiently large for the preceding reduction to be valid. Substituting \eqref{eq:26} into \eqref{eq:14}, we obtain
\begin{equation}
    S_m(p^l)\equiv (-1)^mt^{M+m}\left(\frac{B_{M+1}-B_{M+1}\left(\frac st\right)}{M+1}\right) \pmod {p^l}
    \label{eq:27}
\end{equation}
Moreover, suppose $m$ is odd and set
\begin{equation*}
    M=p^h\varphi(p^l)-m=\varphi(p^{l+h})-m
\end{equation*}
Then
\begin{align}
    S_m(p^l)&\equiv-t^{M+m}\frac{B_{M+1}-B_{M+1}\left(\frac st\right)}{M+1} \\
    &\equiv \frac{B_{M+1}\left(\frac st\right)-B_{M+1}}{M+1} \pmod {p^l}
    \label{eq:29}
\end{align}
because $M+m$ is even and $t^{\varphi(p^{l+h})}\equiv 1 \pmod {p^l}$.

We next recall a Kummer-type congruence for Bernoulli polynomials, which will be used to replace the index $M+1$ by a lower congruent index.
\begin{lemma}[See\cite{15}, Lemma 2.5]
    Let $p$ be an odd prime and $l\geq1$. Let $N$ and $N'$ be positive even integers such that $N,N'>l$,
    \begin{equation*}
        N \equiv N' \mod \varphi(p^l)
    \end{equation*}
    and $p-1\nmid N,N'$. Then, for every rational number $x$ whose denominator is not divisible by $p$, one has
    \begin{equation}
        \frac{B_N(x)}{N} \equiv \frac{B_{N'}(x)}{N'} \pmod {p^l}
        \label{eq:30}
    \end{equation}
\end{lemma}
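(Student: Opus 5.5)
The statement is a Kummer-type congruence for Bernoulli polynomials; I would prove it from the Voronoi/Kummer congruence for the Bernoulli numbers via the expansion in $x$. Write $x=a/b$ with $p\nmid b$, and start from the standard identity
\begin{equation*}
b^{N}B_N\!\left(\tfrac ab\right)\ \text{ expressed through } \sum_{c=0}^{b-1}\ \text{(distribution relation)},\qquad B_N(bx)=b^{N-1}\sum_{c=0}^{b-1}B_N\!\left(x+\tfrac cb\right).
\end{equation*}
The cleaner route is the $p$-adic one. Consider the difference $\frac{B_N(x)}{N}-\frac{B_{N'}(x)}{N'}$. Reduce to $0\le x<1$ by the translation formula $B_N(x+1)-B_N(x)=Nx^{N-1}$. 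This changes $B_N(x)/N$ by $x^{N-1}$, and $x^{N-1}\equiv x^{N'-1}\pmod{p^l}$ when $p\nmid x$. When $p\mid x$ in $\mathbb{Z}_{(p)}$, both terms vanish mod $p^l$ because $N,N'>l$.

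Then I would use the integral representation via the Bernoulli measure. For a $p$-adic unit $b$ (the denominator), the identity $\sum_{r=0}^{bp^k-1}(x+r)^{N}$ gives
\begin{equation*}
\frac{B_{N}(x)}{N}\approx \frac{1}{N}\cdot\frac{1}{bp^k}\sum_{r=0}^{bp^k-1}\bigl((x+r)^{N}-\ldots\bigr).
\end{equation*}
This is the Raabe/Faulhaber formula: $\frac{1}{Mp^k}\sum_{r=0}^{Mp^k-1}(x+r)^N \to B_N(x)$ $p$-adically. Concretely, $\sum_{r=0}^{L-1}(x+r)^{N}=\frac{B_{N+1}(x+L)-B_{N+1}(x)}{N+1}$, expanded as in \eqref{eq:17}, yields
\begin{equation*}
\frac{B_N(x)}{N}\equiv \frac{1}{NL}\sum_{r=0}^{L-1}(x+r)^N-\frac{L}{2}\,(\dots)\pmod{p^l}
\end{equation*}
with $L=bp^k$ and $k$ large. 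In this sum, the terms with $p\mid x+r$ contribute $p^{N}$-divisible quantities, which vanish since $N>l$ and $N'>l$. The terms with $p\nmid x+r$ satisfy $(x+r)^N\equiv (x+r)^{N'}\pmod{p^{l+k}}$, since $\varphi(p^l)\mid N-N'$. To control the division by $L$ I would pick $k$ so that the $p$-power lost in dividing by $p^k$ is absorbed: take $N\equiv N'$ modulo $\varphi(p^{l+k})$ by replacing $N'$ with nearby exponents.

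The main obstacle is exactly this loss of $p^k$ (and of $p$-adic valuation in $N$ itself). The standard remedy, which I would follow, is the Kummer-congruence proof via Clausen–von Staudt. First prove the claim for $x=0$, where it is the classical Kummer congruence $B_N/N\equiv B_{N'}/N'\pmod{p^l}$ given $p-1\nmid N$. Then write $B_N(x)=\sum_{i}\binom Ni B_{N-i}x^{i}$, so that
\begin{equation*}
\frac{B_N(x)}{N}=\frac{B_N}{N}+\sum_{i\ge1}\binom{N-1}{i-1}\frac{B_{N-i}}{i}\,x^{i}.
\end{equation*}
Comparing these coefficients for $N,N'$ is messy, so I prefer instead the generalized Bernoulli-number approach. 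Using the hypothesis $p-1\nmid N$, the function $N\mapsto (1-p^{N-1})B_N(x)/N$ interpolates a $p$-adic Mellin transform $\int_{\mathbb{Z}_p^\times}(x+y)^{N-1}\,d\mu$ against the regularized Bernoulli measure $\mu_c$ with $c$ a primitive root, $c\not\equiv1\pmod p$. The integrand is congruent mod $p^l$ for $N\equiv N'\pmod{\varphi(p^l)}$, the measure is $\mathbb{Z}_p$-valued, and the factor $1-c^{N}$ is a unit because $p-1\nmid N$. Finally, the Euler factor $p^{N-1}$-correction is divisible by $p^l$ since $N-1\ge l$. Careful bookkeeping of that correction term — it actually involves $B_N(\text{shifted }x)/N$, and its $p$-divisibility needs $N>l$ — is the step I expect to be hardest.
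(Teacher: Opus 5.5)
Your final route depends on the claim that integrating $(x+y)^{N-1}$ against the regularized measure $\mu_c$ gives $(1-c^N)$ times an (Euler-modified) $B_N(x)/N$, so that ``the factor $1-c^N$ is a unit'' closes the argument. That shape is correct only at $x=0$. Normalize $\mu_c$ by $\int_{\mathbb{Z}_p}y^{i}\,d\mu_c=(1-c^{i+1})B_{i+1}/(i+1)$. A binomial expansion then gives, for all $x\in\mathbb{Z}_p$,
\begin{equation*}
\int_{\mathbb{Z}_p}(x+y)^{N-1}\,d\mu_c(y)=\frac{1}{N}\Bigl(B_N(x)-c^{N}B_N\bigl(\tfrac{x}{c}\bigr)\Bigr),
\end{equation*}
which mixes two evaluation points. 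For example, with $p=5$, $c=3$, $x=\tfrac12$, $N=2$ the integral equals $-\tfrac16$, while $(1-c^2)B_2(\tfrac12)/2=\tfrac13$, and these already differ modulo $5$. Restricting to $\mathbb{Z}_p^\times$ or inserting $1-p^{N-1}$ does not repair this. So for an arbitrary primitive root $c$ you only get congruences between the combinations $B_N(x)-c^NB_N(x/c)$, not between the values $B_N(x)/N$. Passing from Kummer's case $x=0$ to a rational point is the whole content of the lemma, and the idea that does it is missing: $c$ must be compatible with the denominator of $x$. Write $x=a/b$ with $0\le x<1$ (your translation reduction is fine). By the Chinese remainder theorem choose an integer $c>1$ with $c\equiv1\pmod b$ and $c$ a primitive root modulo $p$. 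Then $A:=(c-1)x$ is a nonnegative integer, and evaluating the identity at $cx$ gives
\begin{equation*}
\int_{\mathbb{Z}_p}(cx+y)^{N-1}\,d\mu_c(y)=(1-c^{N})\frac{B_N(x)}{N}+\sum_{j=0}^{A-1}(x+j)^{N-1}.
\end{equation*}
Now $z^{N-1}\equiv z^{N'-1}\pmod{p^l}$ for every $z\in\mathbb{Z}_p$: by Euler if $p\nmid z$, and because both sides vanish if $p\mid z$, since $N-1,N'-1\ge l$. Also $\mu_c$ is $\mathbb{Z}_p$-valued, and $1-c^N\equiv1-c^{N'}\pmod{p^l}$ is a unit since $p-1\nmid N$. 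Together these give both the $p$-integrality of $B_N(x)/N$ and the congruence.

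There are two smaller points. First, the Euler-factor detour is unnecessary. The hypothesis $N,N'>l$ is exactly what lets you integrate over all of $\mathbb{Z}_p$, since wherever $p\mid x+y$ the integrand is already $\equiv0\pmod{p^l}$. Note also that on $y\in\mathbb{Z}_p^\times$ it is $x+y$, not $y$, that would have to be a unit. Second, the abandoned Raabe-sum route contains a false step. The condition $\varphi(p^l)\mid N-N'$ gives $(x+r)^N\equiv(x+r)^{N'}$ only modulo $p^l$, not $p^{l+k}$, and $N,N'$ are fixed, so the lost factor $p^k$ cannot be recovered that way.

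For comparison, the paper does not prove this lemma at all; it is only cited. However, the paper's own Lemma 2.4 (Sun's congruence), applied at the $p$-integral rational point $x$, gives the corrected argument in elementary Voronoi form. Take $q=p^l$, multiplier $c$ as above, and shift $A=(c-1)x$. Then $(x+A)/c=x$, so the left side is $(c^N-1)B_N(x)/N$. The right side, $\sum_{j=0}^{p^l-1}\bigl(\lfloor (A+jc)/p^l\rfloor+\tfrac{1-c}{2}\bigr)c^{N-1}(x+j)^{N-1}$, is termwise congruent for $N$ and $N'$ modulo $p^l$.
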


Take $N=M+1,N'=\varphi(p^l)-m+1$ and $x=\frac st$. Since
\begin{equation*}
    M+1=\varphi(p^{l+h})-m+1=p^h\varphi(p^l)-m+1
\end{equation*}
we have 
\begin{equation*}
    M+1\equiv -m+1 \pmod {\varphi(p^l)}
\end{equation*}
Moreover, $\frac st$ is coprime with $p$ due to the definition of $s$. Applying \eqref{eq:30}, we obtain
\begin{equation}
    \frac{B_{M+1}\left(\frac st\right)}{M+1}-\frac{B_{M+1}}{M+1}\equiv \frac{B_{\varphi(p^l)-m+1}\left(\frac{s}{t}\right)}{\varphi(p^l)-m+1}-\frac{B_{M+1}}{M+1} \pmod {p^l}
    \label{eq:31}
\end{equation}

Now set $t=e$, Since $e\in \{2,3,4,6\}$ and $(p^l,6)=1$, the least positive residue $s$ of $p^l$ modulo $e$ is either $1$ or $e-1$. Thus 
\begin{equation*}
    J_e(p^l)=\begin{cases}
        1, \ p^l\equiv 1 \pmod e \\
        -1, \ p^l\equiv -1 \pmod e
    \end{cases}
\end{equation*}
Equivalently, $\frac se=\frac 1e$ when $J_e(p^l)=1$, and $\frac se=1-\frac 1e$ when $J_e(p^l)=-1$.
If $s=1$, one has
    \begin{equation*}
        S_m(p^l)\equiv J_e(p^l)\left(\frac{B_{\varphi(p^l)-m+1}\left(\frac 1e\right)}{\varphi(p^l)-m+1}-\frac{B_{M+1}}{M+1}\right) \pmod {p^l}
    \end{equation*}
and if $s=e-1$, note that 
    \begin{align*}
        B_{\varphi(p^l)-m+1}\left(1-\frac 1e\right)&=(-1)^{\varphi(p^l)-m+1}B_{\varphi(p^l)-m+1}\left(\frac 1e\right) \\
        &=B_{\varphi(p^l)-m+1}\left(\frac 1e\right)
    \end{align*}
and $J_e(p^l)=-1$, we obtain
    \begin{equation*}
        S_m(p^l)\equiv -J_e(p^l)\left(\frac{B_{\varphi(p^l)-m+1}\left(\frac 1e\right)}{\varphi(p^l)-m+1}-\frac{B_{M+1}}{M+1}\right) \pmod {p^l}
    \end{equation*}
We therefore obtain the following odd-order analogue of congruence (3.1) in \cite{1}.
\begin{theorem}
    Let $p\geq5$ be a prime, $m\geq 3$ be an odd integer satisfying $m\leq \varphi(p^l)-l$ and $m \not\equiv 1 \mod p-1$. For $e=2,3,4,6$, we have
    \begin{equation}
        S_m(p^l)\equiv \frac{B_{\varphi(p^l)-m+1}\left(\frac 1e\right)}{\varphi(p^l)-m+1}-\frac{B_{\varphi(p^{l+h})-m+1}}{\varphi(p^{l+h})-m+1} \pmod {p^l}
        \label{eq:32}
    \end{equation}
    where $h$ is an arbitrary integer such that $\varphi(p^{l+h})-m\geq0$. 
\end{theorem}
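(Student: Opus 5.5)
Taking $t=e$ throughout, the plan is to chain the reductions already set up in this section. Fix $h\ge 0$ and put $M=\varphi(p^{l+h})-m$, which is odd and nonnegative. The hypothesis $m\le\varphi(p^l)-l$ gives $M\ge\varphi(p^l)-m\ge l$, so the passage from \eqref{eq:12} to \eqref{eq:14} is legitimate. Indeed, for $p\mid r$ we have $p\mid p^l-er$, so the added terms $(p^l-er)^M$ are divisible by $p^M$ and hence by $p^l$. This yields
\[
S_m(p^l)\equiv -e^m\sum_{r=1}^{N}(p^l-er)^M \pmod{p^l}.
\]

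Next I will apply the odd case \eqref{eq:26} of the reduced Lemma 2.1. This needs $M\ge 3$, which holds because $M\ge l$ and, if necessary, we enlarge $h$; at the end we must check that the final expression does not depend on $h$. Reducing \eqref{eq:26} from $p^{2l}$ to $p^l$ gives \eqref{eq:27}. Since $M+m=\varphi(p^{l+h})$ is even and divisible by $\varphi(p^l)$, Euler's theorem gives $e^{M+m}\equiv1\pmod{p^l}$, and this produces \eqref{eq:29}. We must also make sure $(B_{M+1}-B_{M+1}(s/e))/(M+1)$ is $p$-integral. I expect this to follow from the von Staudt–Clausen and Kummer-type facts, using $p-1\nmid M+1$; this is exactly where the condition $m\not\equiv1\pmod{p-1}$ enters, since $M+1\equiv 1-m\pmod{p-1}$.

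The key step is to lower the index of the Bernoulli polynomial term via the Kummer-type Lemma 2.2, with $N=M+1$, $N'=\varphi(p^l)-m+1$ and $x=s/e$. Both indices are even. Both exceed $l$: $N'\ge l+1$ by the hypothesis on $m$, and $N\ge N'$. They are congruent modulo $\varphi(p^l)$, and neither is divisible by $p-1$, again because $m\not\equiv1\pmod{p-1}$. This gives \eqref{eq:31}. I will deliberately \emph{not} lower the pure Bernoulli number term $B_{M+1}/(M+1)$, which is why the statement keeps the index $\varphi(p^{l+h})-m+1$ there. Note, though, that Lemma 2.2 also applies with $x=0$, which shows that this term is independent of $h$ modulo $p^l$; that settles the claim that $h$ is arbitrary.

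Finally, I split according to $s\in\{1,e-1\}$. If $s=1$ there is nothing more to do. If $s=e-1$, I use the reflection $B_n(1-x)=(-1)^nB_n(x)$ with $n=\varphi(p^l)-m+1$ even, which replaces $B_n((e-1)/e)$ by $B_n(1/e)$. In both cases the sign in front is $-e^{M+m}$ after the manipulation of \eqref{eq:29}, i.e. the $+$ sign displayed there, so no factor $J_e(p^l)$ survives. I expect this sign bookkeeping to be the main obstacle. The intermediate displays in the text carry a $J_e$ factor, whereas \eqref{eq:32} has none. I would recheck carefully that the $(-1)^m$ from \eqref{eq:12} and the minus sign in \eqref{eq:26} combine to give $+\bigl(B_{M+1}(s/e)-B_{M+1}\bigr)/(M+1)$ independently of $s$, and then conclude \eqref{eq:32}.
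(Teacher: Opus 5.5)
Your proposal is correct and follows the paper's own argument: \eqref{eq:12}--\eqref{eq:14} with $M=\varphi(p^{l+h})-m\ge l$, the odd case \eqref{eq:26} used only modulo $p^l$, Euler's theorem to remove $e^{M+m}$, Lemma 2.2 with $N=M+1$, $N'=\varphi(p^l)-m+1$, $x=s/e$, and the reflection $B_n(1-x)=B_n(x)$ when $s=e-1$. The $J_e$ factors in the paper's intermediate displays are harmless, since $J_e(p^l)=1$ when $s=1$ and $-J_e(p^l)=1$ when $s=e-1$, exactly as you concluded.

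Your extra care refines the route rather than changing it. Fixing $h\ge 0$ is genuinely necessary: for $p=5$, $l=2$, $m=3$, $e=2$, $h=-1$ the congruence fails modulo $25$. Enlarging $h$ to force $M\ge 3$ and then using Lemma 2.2 at $x=0$ covers the case $l=1$, $m=p-2$ (where $h=0$ gives $M=1$), which the paper's ``$M$ sufficiently large'' glosses over. It is also good that you only need modulus $p^l$, because \eqref{eq:26} at modulus $p^{2l}$ can fail when $p-1\mid M-1$: the dropped term $\frac{M}{2}B_{M-1}t^{M-2}p^{2l}$ has valuation only $\ge 2l-1$.
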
 

\subsection{A Lifting Relation for Truncated Reciprocal Sums}
Following the method of \cite{1}, we next prove the following lifting relation:
\begin{equation}
    \sum_{\substack{r=1 \\ p \nmid r}}^{ \lfloor cp^l/e \rfloor}\frac{1}{r^m}\equiv J_e(c)^{m-1}S_m(p^l) \pmod {p^l}
    \label{eq:33}
\end{equation}
for any positive integer $c$ that $(c,e)=1$ and $p-1\nmid m$.

To this end, we first need the following higher-order version of Lemma 2.2 in \cite{1}.
\begin{lemma}
    Let $p \geq 5$ be a prime, let $l \geq 1$, and let $m$ be an integer satisfying $p-1 \nmid m$. Then
    \begin{equation}
        \sum_{\substack{i=1 \\ (i,n)=1}}^{n-1}\frac{1}{i^m}\equiv 0 \pmod {p^l}
        \label{eq:34}
    \end{equation}
\end{lemma}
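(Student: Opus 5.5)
We read $n=p^l$ throughout, since that is the only modulus in play in this subsection. The plan is the standard unit-group permutation argument. Each term $1/i^m$ is a rational number whose denominator is prime to $p$. So the congruence modulo $p^l$ makes sense in $\mathbb{Z}_{(p)}$, and each term may be replaced by $\bar{\imath}^{-m}$, where $\bar{\imath}$ is the residue class of $i$ in $(\mathbb{Z}/p^l\mathbb{Z})^\times$. The sum in \eqref{eq:34} then becomes
\begin{equation*}
    \Sigma:=\sum_{u\in(\mathbb{Z}/p^l\mathbb{Z})^\times}u^{-m}\in\mathbb{Z}/p^l\mathbb{Z},
\end{equation*}
because the integers $1\le i\le p^l-1$ with $p\nmid i$ run over each reduced residue class exactly once.

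Since $p$ is odd, the group $(\mathbb{Z}/p^l\mathbb{Z})^\times$ is cyclic. Fix a primitive root $g$ modulo $p^l$. Multiplication by $g$ permutes the reduced residues, so
\begin{equation*}
    \Sigma=\sum_{u}(gu)^{-m}=g^{-m}\Sigma,\qquad\text{hence}\qquad (1-g^{-m})\,\Sigma\equiv 0 \pmod{p^l}.
\end{equation*}

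It remains to show that $1-g^{-m}$ is a unit modulo $p^l$, that is, $g^{m}\not\equiv 1\pmod p$. A primitive root modulo $p^l$ reduces to a primitive root modulo $p$: if its order modulo $p$ were a proper divisor $d$ of $p-1$, then $g^{d p^{l-1}}\equiv 1\pmod{p^l}$, contradicting order $\varphi(p^l)$. So $g$ has order exactly $p-1$ modulo $p$. The hypothesis $p-1\nmid m$ then gives $g^m\not\equiv 1\pmod p$, so $1-g^{-m}$ is invertible modulo $p^l$ and $\Sigma\equiv 0\pmod{p^l}$.

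None of these steps is a real obstacle. The only points needing care are justifying that rational terms can be replaced by residues modulo $p^l$, and the reduction of a primitive root modulo $p^l$ to one modulo $p$; both are routine. The argument works for any integer $m$, positive or negative, with $p-1\nmid m$, and it uses only that $p$ is odd (so the hypothesis $p\ge5$ is more than needed).
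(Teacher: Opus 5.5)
Your proof is correct and is essentially the paper's argument. The paper replaces $i^{-m}$ by $i^{k}$ with $k=t\varphi(p^l)-m\ge 0$, writes the sum as a geometric series in $\omega=g^{k}\equiv g^{-m}\pmod{p^l}$, and concludes from $\omega^{d}\equiv 1$ and the invertibility of $\omega-1$; this is exactly your identity $(1-g^{-m})\Sigma\equiv 0\pmod{p^l}$. Your shift formulation streamlines this by avoiding the Euler exponent shift (and the tacit step $k\equiv -m\pmod{p-1}$), and it also justifies that a primitive root modulo $p^l$ is one modulo $p$, which the paper asserts without proof.
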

\begin{proof}
By Euler's theorem, $i^{\varphi(n)}\equiv 1 \pmod n$. For fixed $m$, take $t\in\mathbb{Z}$ large enough so that
\begin{equation*}
    k=t\varphi(n)-m\geq 0
\end{equation*}
Then $i^{-m}\equiv i^k \pmod n$, and hence
\begin{equation}
    \sum_{\substack{i=1 \\ (i,p)=1}}^{n-1}\frac{1}{i^m} \equiv \sum_{\substack{i=1 \\ (i,p)=1}}^{n-1}i^k \pmod {p^l}
    \label{eq:35}
\end{equation}
Let \begin{equation*}
     T_k(n)=\sum_{\substack{i=1 \\ (i,p)=1}}^{n-1}i^k
\end{equation*}
Since $(\mathbb{Z}/n\mathbb{Z})^\times$ is a cyclic group, let $(\mathbb{Z}/n\mathbb{Z})^\times=:\langle g \rangle$, where $g$ is a primitive root modulo $p^l$. Hence 
\begin{equation}
    T_k(p^l)=\sum_{t=0}^{\varphi(p^l)-1}(g^t)^k=\sum_{t=0}^{\varphi(p^l)-1}(g^k)^t \label{eq:36}
\end{equation}
Put $\omega=g^k$. The order of $\omega$ in group $(\mathbb{Z}/n\mathbb{Z})^\times$ is
\begin{equation*}
    d:=\frac{\varphi(p^l)}{(\varphi(p^l),k)}
\end{equation*}
Thus
\begin{equation} 
    \omega^d\equiv 1 \pmod {p^l} \label{eq:37}
\end{equation}
and
\begin{equation}
    T_k(p^l)=\frac{\varphi(p^l)}{d}\sum_{t=0}^{d-1}\omega^t \label{eq:38}
\end{equation}
Note that $g$ is also a primitive root modulo $p$, the order of $\omega$ in group $(\mathbb{Z}/p\mathbb{Z})^\times$ is $\frac{p-1}{(p-1,k)}>1$ since $p-1 \nmid k$, therefore
\begin{equation*}
    \omega \not\equiv 1 \pmod p
\end{equation*}
Equivalently, $p \nmid \omega-1$ and $\omega-1$ is an invertible element in $(\mathbb{Z}/p\mathbb{Z})^\times$. Moreover,  $(\mathbb{Z}/p^l\mathbb{Z})^\times$. Combining \eqref{eq:37}, we obtain
\begin{equation}
    \sum_{t=0}^{d-1}\omega^t=\frac{\omega^d-1}{\omega-1}\equiv 0 \pmod {p^l} \label{eq:39}
\end{equation}
It follows that
\begin{equation}
    T_k(p^l)\equiv \sum_{\substack{i=1 \\ (i,n)=1}}^{n-1}\frac{1}{i^m} \equiv 0 \pmod {p^l} \label{eq:40}
\end{equation}
which proves the lemma.
\end{proof}

We now prove \eqref{eq:33}. If $c\equiv 1 \pmod e$, write $c=ek+1$ for some nonnegative integer $k$. Then
    \begin{align*}
        \sum_{\substack{r=1 \\ p \nmid r}}^{ \lfloor cp^l/e \rfloor}\frac{1}{r^m}&=\sum_{\substack{r=1 \\ p \nmid r}}^{kp^l}\frac{1}{r^m}+\sum_{\substack{r=kp^l+1 \\ p \nmid r}}^{kp^l+\lfloor p^l/e \rfloor}\frac{1}{r^m} \\
        &\equiv \sum_{a=0}^{k-1}\sum_{\substack{b=0 \\ p \nmid b}}^{p^l-1}\frac{1}{(ap^l+b)^m}+\sum_{\substack{r=1 \\ p \nmid r}}^{\lfloor p^l/e \rfloor}\frac{1}{(kp^l+r)^m} \\
        &\equiv \sum_{a=0}^{k-1}\sum_{\substack{b=0 \\ p \nmid b}}^{p^l-1}\frac{1}{b^m}+\sum_{\substack{r=1 \\ p \nmid r}}^{\lfloor p^l/e \rfloor}\frac{1}{r^m} \\
        &\equiv k\sum_{\substack{b=0 \\ p \nmid b}}^{p^l-1}\frac{1}{b^m}+\sum_{\substack{r=1 \\ p \nmid r}}^{ \lfloor p^l/e \rfloor}\frac{1}{r^m} \\
        &\equiv \sum_{\substack{r=1 \\ p \nmid r}}^{\lfloor p^l/e \rfloor}\frac{1}{r^m} \\
        &= S_m(p^l) \pmod {p^l}
    \end{align*}
And if $c\equiv -1 \pmod e$, write $c=ek-1$. Then
    \begin{align*}
        \sum_{\substack{r=1 \\ p \nmid r}}^{\lfloor cp^l/e \rfloor}\frac{1}{r^m} &= \sum_{\substack{r=1 \\ p \nmid r}}^{kp^l-\lfloor p^l/e\rfloor-1}\frac{1}{r^m} \\
        &\equiv -\sum_{\substack{r=kp^l-\lfloor p^l/e \rfloor \\ p \nmid r}}^{kp^l-1} \frac{1}{r^m} \\
        &\equiv -(-1)^m\sum_{\substack{r=1 \\ p \nmid r}}^{\lfloor p^l/e \rfloor} \frac{1}{r^m} \\
        &\equiv (-1)^{m-1}S_m(p^l) \\
        &\equiv J_e(c)^{m-1}S_m(p^l) \pmod {p^l}
    \end{align*}
So \eqref{eq:33} is valid. Furthermore, let $m$ be an odd integer. If $p^l||n$, taking $c=\frac{n}{p^l}$ in \eqref{eq:33} and combining the result with \eqref{eq:32}, we get
\begin{equation}
    \sum_{\substack{r=1 \\ p \nmid r}}^{\lfloor n/e \rfloor}\frac{1}{r^m} \equiv \frac{B_{\varphi(p^l)-m+1}\left(\frac 1e\right)}{\varphi(p^l)-m+1}-\frac{B_{\varphi(p^{l+h})-m+1}}{\varphi(p^{l+h})-m+1} \pmod {p^l}
    \label{eq:41}
\end{equation}
because $J_e\left(\frac{n}{p^l}\right)^{m-1}=1$.

\subsection{The Proof of Theorem 1.3}
Suppose $\varphi(n)-m\geq l$, we prove that 
\begin{equation}
    \sum_{\substack{r=1 \\ (r,n)=1}}^{\lfloor n/e \rfloor}\frac{1}{r^m} \equiv n^{\varphi(n)-m}\varphi_{1}^{(m-\varphi(n))}(n)\left(\frac{B_{\varphi(p^l)-m+1}\left(\frac 1e\right)}{\varphi(p^l)-m+1}-\frac{B_{\varphi(p^{l+h})-m+1}}{\varphi(p^{l+h})-m+1}\right) \pmod {p^l}
    \label{eq:42}
\end{equation}
Assume $p_1,p_2,\cdots,p_u$ are different prime factors of $n$. Since $\varphi(n)-1\geq\varphi(p^l)-1=p^{l-1}(p-1)-1\geq 4\cdot5^{l-1}-1>l$, we obtain
\begin{align*}
    \sum_{\substack{r=1 \\ (r,n)=1}}^{\lfloor n/e \rfloor}\frac{1}{r^m}&=\sum_{\substack{r=1 \\ p \nmid r}}^{\lfloor n/e \rfloor}\frac{1}{r^m}-\sum_i\sum_{\substack{r=1 \\ p \nmid r \\ p_i \mid r}}^{\lfloor n/e \rfloor}\frac{1}{r^m}+\sum_{i,j}\sum_{\substack{r=1 \\ p \nmid r \\ p_ip_j \mid r}}^{\lfloor n/e \rfloor}\frac{1}{r^m}+\cdots+(-1)^u\sum_{\substack{r=1 \\ p \nmid r \\ p_1p_2\cdots p_u \mid r}}^{\lfloor n/e \rfloor}\frac{1}{r^m} \\
    &\equiv \left(1-\sum_i\frac1{p_i^m}+\sum_{i,j}\frac{1}{p_i^mp_j^m}+\cdots+(-1)^u\frac{1}{p_1^mp_2^m\cdots p_u^m}\right)\left(\frac{B_{\varphi(p^l)-m+1}\left(\frac 1e\right)}{\varphi(p^l)-m+1}-\frac{B_{\varphi(p^{l+h})-m+1}}{\varphi(p^{l+h})-m+1}\right) \\
    &\equiv \prod_{\substack{q|\frac{n}{p^l}}}\left(1-\frac{1}{q^m}\right)\left(\frac{B_{\varphi(p^l)-m+1}\left(\frac 1e\right)}{\varphi(p^l)-m+1}-\frac{B_{\varphi(p^{l+h})-m+1}}{\varphi(p^{l+h})-m+1}\right) \\
    &\equiv \prod_{\substack{q\mid n \\ q \neq p}}\left(1-q^{\varphi(n)-m}\right) \left(\frac{B_{\varphi(p^l)-m+1}\left(\frac 1e\right)}{\varphi(p^l)-m+1}-\frac{B_{\varphi(p^{l+h})-m+1}}{\varphi(p^{l+h})-m+1}\right) \\
    &\equiv \prod_{q\mid n}(1-q^{\varphi(n)-m}) \left(\frac{B_{\varphi(p^l)-m+1}\left(\frac 1e\right)}{\varphi(p^l)-m+1}-\frac{B_{\varphi(p^{l+h})-m+1}}{\varphi(p^{l+h})-m+1}\right) \\
    &\equiv n^{\varphi(n)-m}\varphi_1^{(m-\varphi(n))}(n)\left(\frac{B_{\varphi(p^l)-m+1}\left(\frac 1e\right)}{\varphi(p^l)-m+1}-\frac{B_{\varphi(p^{l+h})-m+1}}{\varphi(p^{l+h})-m+1}\right) \pmod {p^l}
\end{align*}
since $1-p^{\varphi(n)-m}\equiv 1 \pmod {p^l}$. Hence \eqref{eq:42} follows.

It remains to replace the local Bernoulli-polynomial indices with the corresponding global index. We use Sun’s congruence for Bernoulli polynomials.
\begin{lemma}[\cite{6}]
    Let $a\in\mathbb{Z}, k,q,m\in\mathbb{Z}^+$ and $(m,q)=1$. Then
    \begin{equation}
        \frac 1k\left(m^kB_k\left(\frac{x+a}{m}\right)-B_k(x)\right)\equiv \sum_{j=0}^{q-1}\left(\lfloor\frac{a+jm}{q} \rfloor+\frac{1-m}{2}\right)\left(x+a+jm\right)^{k-1} \pmod q
        \label{eq:43}
    \end{equation}
\end{lemma}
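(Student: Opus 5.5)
The plan is to prove \eqref{eq:43} by comparing two exact polynomial identities in $x$ and then reducing modulo $q$. Throughout, a congruence modulo $q$ between rational polynomials means that the coefficients of the difference are rationals whose denominators are prime to $q$ and whose numerators are divisible by $q$. The two tools are the difference equation
\[
B_k(z+1)-B_k(z)=kz^{k-1}
\]
and Raabe's multiplication formula
\[
B_k(x)=q^{k-1}\sum_{r=0}^{q-1}B_k\!\left(\frac{x+r}{q}\right).
\]
Since $(m,q)=1$, the integers $a+jm$ for $0\le j\le q-1$ run over a complete residue system modulo $q$. Write $a+jm=q\,n_j+r_j$ with $n_j=\lfloor (a+jm)/q\rfloor$ and $0\le r_j<q$. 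Then Raabe's formula becomes
\[
B_k(x)=q^{k-1}\sum_{j=0}^{q-1}B_k\!\left(z_j-n_j\right),\qquad z_j=\frac{x+a+jm}{q}.
\]

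\textbf{Step 1: extract the floor term.} Iterating the difference equation, for $n_j\ge0$ we have
\[
B_k(z_j-n_j)=B_k(z_j)-k\sum_{i=1}^{n_j}(z_j-i)^{k-1},
\]
and the analogous identity with the sign reversed holds for $n_j<0$. Multiplying by $q^{k-1}/k$ turns each summand into $(x+a+jm-iq)^{k-1}\equiv (x+a+jm)^{k-1}\pmod q$. This term contributes $n_j(x+a+jm)^{k-1}$, which is exactly the floor part of the right side of \eqref{eq:43}, with the sign produced when $B_k(x)$ is moved to the left.

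\textbf{Step 2: handle the main sum.} It remains to show
\[
\frac1k\Bigl(m^kB_k\bigl(\tfrac{x+a}{m}\bigr)-q^{k-1}\sum_{j}B_k(z_j)\Bigr)\equiv\frac{1-m}{2}\sum_j(x+a+jm)^{k-1}\pmod q.
\]
Expand $B_k(z)=\sum_i\binom ki B_iz^{k-i}$, so that $q^{k-1}z_j^{k-i}=q^{i-1}(x+a+jm)^{k-i}$.
\begin{itemize}
\item The $i=1$ term gives $-\tfrac{k}{2}\sum_j(x+a+jm)^{k-1}$.
\item The $i=0$ term is $\tfrac1q\sum_j(x+a+jm)^k=\tfrac{m^k}{q}\sum_{j=0}^{q-1}(y+j)^k$ with $y=(x+a)/m$. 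By Faulhaber this equals $\tfrac{m^k}{q(k+1)}\bigl(B_{k+1}(y+q)-B_{k+1}(y)\bigr)$. Expanding by the addition formula in powers of $q$, this is
\[
m^kB_k(y)+\tfrac{k}{2}q\,m^kB_{k-1}(y)+\cdots.
\]
Its leading term cancels $m^kB_k\bigl(\tfrac{x+a}{m}\bigr)$. Its second term, rewritten through $m^{k-1}B_{k-1}(y)\equiv$ the corresponding power sum, supplies the remaining $\tfrac{m}{2}\sum_j(x+a+jm)^{k-1}$, up to a multiple of $q$.
\end{itemize}
Combining the $i=1$ term with this second-order piece produces the coefficient $\tfrac{1-m}{2}$.

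\textbf{Main obstacle.} The hard part is justifying that all remaining terms vanish modulo $q$ after division by $k$. These are the terms with $i\ge2$ in the first expansion and those of order $q^{\ge2}$ in the Faulhaber expansion. They involve $q^{i-1}\binom ki B_i/k$, and $B_i$ carries denominators $p$ with $p-1\mid i$ by von Staudt--Clausen. For $p\mid q$ one must check that $p^{v_p(q)(i-1)}$ beats both the factor $p$ from $B_i$ and the $p$-part of $k$ in $\tfrac1k\binom ki=\tfrac1i\binom{k-1}{i-1}$. I would handle this prime by prime: reduce to $q=p^{\alpha}$ using the Chinese remainder theorem, which is permissible since both sides are additive in the modulus structure once $p$-integrality is known. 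Then bound $v_p\!\left(p^{\alpha(i-1)}B_i/i\right)\ge \alpha(i-1)-1-v_p(i)\ge\alpha$ for $i\ge2$, with the small cases $i=2,3$ and $p=2,3$ checked directly; for $p=2$ the factor $\tfrac12$ is absorbed because $1-m$ is even when $q$ is even. If this valuation bookkeeping proves delicate, I would fall back on Sun's original route, which works with the generating function $te^{xt}/(e^t-1)$ and compares coefficients.
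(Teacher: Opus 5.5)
The paper does not prove this lemma; it quotes it from Sun \cite{6} and only applies it with $q=p^l$, $p\ge 5$. Your skeleton is sound, and the exact bookkeeping checks out. You reindex Raabe's formula by $a+jm=qn_j+r_j$, use the difference equation to extract $n_j(x+a+jm)^{k-1}$ modulo $q$, and use Faulhaber to cancel $m^kB_k(\frac{x+a}{m})$ and assemble the coefficient $\frac{1-m}{2}$. What remains is to show that three leftover families vanish modulo $q$:
\begin{itemize}
\item the Raabe terms $\frac1i\binom{k-1}{i-1}B_iq^{i-1}\sum_j(x+a+jm)^{k-i}$ for $i\ge2$;
\item the Faulhaber terms $\frac{1}{i(i-1)}\binom{k-1}{i-2}q^{i-1}m^kB_{k+1-i}(\frac{x+a}{m})$ for $i\ge3$;
\item the correction $\frac m2\bigl(\sum_j(x+a+jm)^{k-1}-qm^{k-1}B_{k-1}(\frac{x+a}{m})\bigr)$, produced by your step ``$m^{k-1}B_{k-1}(y)\equiv$ the power sum''.
\end{itemize}
Termwise valuation estimates do this when $(q,6)=1$. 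They do not go through the inequality you wrote, though: that inequality fails at $i=2$ for every $p$. The $i=2$ term is harmless for $p\ge5$ only because $B_2=\frac16$ is $p$-integral.

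For $p\in\{2,3\}$ the families do not vanish individually, so ``checking the small cases directly'' cannot succeed.
\begin{itemize}
\item With $q=2$, $m=3$, $a=0$, $k=2$, the Raabe term is $\frac13$ and the correction term is $9$. Neither is $\equiv0\pmod 2$; only their difference is.
\item With $q=3$, $m=2$, $a=0$, $k=6$, the whole Raabe family has constant term $\equiv 340\equiv1\pmod 3$.
\end{itemize}
Your remark that the $\frac12$ is absorbed because $1-m$ is even does not help. The offending $\frac12$ multiplies $m$ alone in the correction term, not $1-m$.

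So what you have is a proof for $(q,6)=1$, which is all the paper actually uses in Section 2.3, but not a proof of the lemma as stated. For $2\mid q$ or $3\mid q$ you need a genuine cancellation argument across the three families, or Sun's own proof; deferring to ``Sun's original route'' is not itself a proof. One smaller point concerns the CRT step. It should be phrased as checking $v_p(E)\ge v_p(q)$ for the total error $E$ and each $p\mid q$, with $q$ held fixed. The right-hand side of the lemma depends on $q$, so it cannot simply be replaced by its analogue for $p^{\alpha}$.
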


Applying Lemma 2.4 with the local indices $k=\varphi(p^l)-m+1,m=e,x=0,a=1$ and $q=p^l$, observe that $e^{\varphi(p^l)}\equiv 1 \pmod {p^l}$ and $B_{k}(0)=B_k$, one obtains
\begin{align*}
    \frac{B_{\varphi(p^l)-m+1}\left(\frac 1e\right)}{\varphi(p^l)-m+1} &\equiv e\sum_{j=0}^{p^l-1}\left(\lfloor\frac{1+je}{p^l} \rfloor+\frac{1-e}{2}\right)(1+je)^{\varphi(p^l)-m}+\frac{B_{\varphi(p^l)-m+1}}{\varphi(p^l)-m+1} \\
    &\equiv e\sum_{\substack{j=0 \\ (p,1+je)=1}}^{p^l-1}\left(\lfloor\frac{1+je}{p^l} \rfloor+\frac{1-e}{2}\right)(1+je)^{-m}+\frac{B_{\varphi(p^l)-m+1}}{\varphi(p^l)-m+1} \pmod {p^l}
\end{align*}
Changing $k$ to $\varphi(n)-m+1$, one can similarly deduce that
\begin{equation}
    \frac{B_{\varphi(n)-m+1}(\frac 1e)}{\varphi(n)-m+1} \equiv e\sum_{\substack{j=0 \\ (p,1+je)=1}}^{p^l-1}\left(\lfloor\frac{1+je}{p^l} \rfloor+\frac{1-e}{2}\right)(1+je)^{-m}+\frac{B_{\varphi(p^l)-m+1}}{\varphi(p^l)-m+1} \pmod {p^l} \label{eq:44}
\end{equation}
Thus, 
\begin{equation}
    \frac{B_{\varphi(p^l)-m+1}\left(\frac 1e\right)}{\varphi(p^l)-m+1} \equiv \frac{B_{\varphi(n)-m+1}\left(\frac 1e\right)}{\varphi(n)-m+1} \pmod {p^l}
    \label{eq:45}
\end{equation} 
Assume that $m\leq \varphi(p^l)$, we may take $h=0$. In the same way, if we take $k=\varphi(p^l)-m+1,m=e,x=0,a=0$ and $q=p^l$ in \eqref{eq:43}, one can similarly obtain 
\begin{equation*}
    \frac{B_{\varphi(p^l)-m+1}}{\varphi(p^l)-m+1} \equiv \frac{B_{\varphi(n)-m+1}}{\varphi(n)-m+1} \mod p^l \tag{45'}\label{eq:45'}
\end{equation*} 
Since this congruence holds for every prime power $p^l||n$, combining \eqref{eq:45}, \eqref{eq:45'} and \eqref{eq:42}, the Chinese remainder theorem yields the desired congruence modulo $n$. This proves Theorem 1.3.

\section{Higher-Order Product Congruences and Bell Polynomial Expansions}
For a fixed positive integer $k$ and $e$, define 
\begin{equation}
    T_n:=\prod_{\substack{r=1 \\ (r,n)=1}}^{\lfloor n/e \rfloor} \frac{kn-r}{r}=(-1)^{\varphi_e(n)}\prod_{\substack{r=1 \\ (r,n)=1}}^{\lfloor n/e \rfloor}\left(1-\frac{kn}{r}\right) \label{eq:46}
\end{equation}
where
\begin{equation*}
    \varphi_e(n)=\#\{r|(r,n)=1,r=1,2,\cdots,\lfloor n/e \rfloor\}
\end{equation*}
Expanding the product through its logarithm in the truncated ring $\mathbb{Z}/n^{K+1}\mathbb{Z}$, we obtain
\begin{align*}
    \log \prod_{\substack{r=1 \\ (r,n)=1}}^{\lfloor n/e \rfloor}\left(1-\frac{kn}{r}\right) &= \sum_{\substack{r=1 \\ (r,n)=1}}^{\lfloor n/e \rfloor}\log\left(1-\frac{kn}{r}\right) \\
    &\equiv \sum_{\substack{r=1 \\ (r,n)=1}}^{\lfloor n/e \rfloor}\left(-\sum_{m=1}^K\frac{\left(\frac{kn}{r}\right)^m}{m}\right) \\
    &\equiv-\sum_{m=1}^K\frac{(kn)^m}{m} \sum_{\substack{r=1 \\ (r,n)=1}}^{\lfloor n/e \rfloor} \frac{1}{r^m} \\
    &\equiv -\sum_{m=1}^K\frac{(kn)^m}{m}S_m(n) \pmod {n^{K+1}}
\end{align*}
Exponentiating the truncated logarithmic expansion gives
\begin{align*}
    \prod_{\substack{r=1 \\ (r,n)=1}}^{\lfloor n/e \rfloor}\left(1-\frac{kn}{r}\right) &= \exp\left(\log\prod_{\substack{r=1 \\ (r,n)=1}}^{\lfloor n/e \rfloor}\left(1-\frac{kn}{r}\right) \right) \\
    &\equiv\exp{\left(-\sum_{m=1}^K\frac{(kn)^m}{m}S_m(n)\right)} \\
    &\equiv \sum_{j=0}^{\infty} \frac{1}{j!}\left(-\sum_{m=1}^K\frac{(kn)^m}{m}S_m(n)\right)^j \\
    &\equiv: \sum_{t=0}^K c_tn^t \pmod {n^{K+1}}
\end{align*}
where $c_0=1,c_1=-kS_1(n),c_2=\frac{k^2}{2}\left(S_1^2(n)-S_2(n)\right),\cdots$. 

Recall the definition of the complete exponential Bell Polynomial:
\begin{equation}
    \exp{\left(\sum_{j=1}^\infty a_j\frac{x^j}{j!}\right)}=:\sum_{m=0}^\infty \frac{\bar{B}_m(a_1,a_2,\cdots,a_m)}{m!}x^m \label{eq:47}
\end{equation}
where $\bar{B}_m(a_1,a_2,\cdots,a_m)$ denotes the $m$-{th} complete exponential Bell polynomial with coefficients $a_1,a_2,\cdots,a_m$, and $\bar{B}_0=1$. Taking $x=n$ and
\begin{equation*}
    a_j=-k^j(j-1)!S_j(n)
\end{equation*}
we obtain
\begin{equation}
    \exp{\left(-\sum_{m=1}^K\frac{(kn)^m}{m}S_m(n)\right)}\equiv \sum_{m=0}^K\frac{\bar{B}_m\left(-kS_1(n),\cdots,-(m-1)!k^mS_m(n)\right)}{m!}n^m \pmod {n^{K+1}} \label{eq:48}
\end{equation}
Therefore
\begin{equation}
    T_n\equiv (-1)^{\varphi_e(n)}\sum_{m=0}^K\frac{\bar{B}_m\left(-kS_1(n),-k^2S_2(n),\cdots,-(m-1)!k^mS_m(n)\right)}{m!}n^m \pmod {n^{K+1}} \label{eq:49}
\end{equation}

Let 
\begin{equation*}
    G_e(n)=\binom{kn-1}{\lfloor n/e \rfloor} =\prod_{r=1}^{\lfloor n/e\rfloor}\frac{kn-r}{r}
\end{equation*}
For each $r$, write $r=du$ where $d=(r,n)$. Then $u\leq \frac{n}{de}$ and $(u,\frac{n}{d})=1$. Hence
\begin{equation}
    G_e(n) =\prod_{d\mid n}\prod_{\substack{1\le u\le \lfloor \frac{n}{de} \rfloor \\(u,\frac nd)=1}} \frac{\frac {kn}{d}-u}{u} =\prod_{d\mid n} T_{n/d} =\prod_{d\mid n}T_d \label{eq:50}
\end{equation}
By the multiplicative form of Möbius inversion, it follows that
\begin{equation}
    T_n=\prod_{d \mid n}\binom{kd-1}{\lfloor d/e \rfloor}^{\mu(n/d)} \label{eq:51}
\end{equation}
Then we obtain Theorem 1.4.

Taking $K=2$, we obtain
\begin{align*}
    \prod_{d \mid n}\binom{kd-1}{\lfloor d/e \rfloor}^{\mu(n/d)} &\equiv (-1)^{\varphi_e(n)}\left(1-knS_1(n)+\frac{n^2}{2}\left(k^2S_1^2(n)-k^2S_2(n)\right)\right) \\
    &\equiv (-1)^{\varphi_e(n)}\left(1-knS_1(n)+\frac{k^2n^2}{2}\left(S_1^2(n)-S_2(n)\right)\right) \pmod {n^3}
\end{align*}
which agrees with the form obtained in \cite{1}. Furthermore, taking $K=3$, we similarly obtain 
\begin{equation}
\begin{split}
    \prod_{d \mid n}\binom{kd-1}{\lfloor d/e \rfloor}^{\mu(n/d)} &\equiv (-1)^{\varphi_e(n)}\bigg(1-knS_1(n)+\frac{k^2n^2}{2}\left(S_1^2(n)-S_2(n)\right)\\
    &-\frac{k^3n^3}{6}\left(S_1^3(n)-3S_1(n)S_2(n)+2S_3(n)\right)\bigg) \pmod {n^4}
    \label{eq:52}
\end{split}
\end{equation}

For computational purposes, we also record a recursive form of Theorem 1.4.
Define $C_0=1$ and for $t\geq 1$, set 
\begin{equation}
    C_t=-\frac1t\sum_{j=1}^{t} k^j S_j(n) C_{t-j} \label{eq:53}
\end{equation}
Then
\begin{equation}
    \prod_{d\mid n}\binom{kd-1}{\lfloor d/e \rfloor}^{\mu(n/d)} \equiv (-1)^{\varphi_e(n)}\sum_{t=0}^{K} C_t n^t \pmod{n^{K+1}}
    \label{eq:54}
\end{equation}
In particular, we have
\begin{equation*}
    C_1=-kS_1(n), \quad C_2=\frac{k^2}{2}\left(S_1(n)^2-S_2(n)\right)
\end{equation*}
and
\begin{equation*}
    C_3=-\frac{k^3}{6}\left(S_1(n)^3-3S_1(n)S_2(n)+2S_3(n)\right)
\end{equation*}

\bibliographystyle{plain}
\bibliography{sample}

\end{document}